\newtheorem{thm}{Theorem}[section]
\newtheorem{prop}[thm]{Proposition}
\newtheorem{lem}[thm]{Lemma}
\newtheorem{cor}[thm]{Corollary}
\newtheorem{defi}{Definition}
\font\msbm=msbm10 at 12pt
\newcommand{\FF}{\mbox{\msbm F}}
\newcommand{\F}{\mathbb{F}}
\newcommand{\w}{\omega}
\def\vv{\mathbf{v}}
\def\vw{\mathbf{w}}
\def\1v{\mathbf{1}}
\def\0v{\mathbf{0}}
\newcommand{\ds}{\displaystyle}
\begin{document}
\title{A Group Induced Four-circulant Construction for Self-dual Codes and New Extremal Binary Self-dual Codes}
\author{
Joe Gildea,\\
University of Chester \\
Department of Mathematics \\
Chester, UK \\
Abidin Kaya \\
Sampoerna University\\ 12780, Jakarta, Indonesia\\
Alexander Tylyshchak\footnote{This author's visit was partially supported by the London Mathematical Society (International Short Visits - Scheme 5).}\\
Department of Algebra\\
Uzhgorod National University\\
Uzhgorod, Ukraine\\
Bahattin Yildiz\footnote{Corresponding author} \\
Department of Mathematics \& Statistics\\
Northern Arizona University  \\
Flagstaff, AZ 86001, USA\\
e-mail: bahattin.yildiz@nau.edu
}
\date{}
\maketitle

\begin{abstract}
We introduce an altered version of the four circulant construction over group rings for self-dual codes. We consider this construction
over the binary field, the rings $\FF_2+u\FF_2$ and $\FF_4+u\FF_4$; using groups of order $3$, $7$, $9$, $13$, and $15$. Through these constructions and their extensions, we find binary self-dual codes of lengths $32$, $40$, $56$, $64$, $68$ and $80$, all of which are extremal or optimal.  In particular, we find five new self-dual codes of parameters $[56, 28,10]$, twenty-three extremal binary self-dual codes of length 68 with new weight enumerators and fifteen new self-dual codes of parameters $[80,40,14]$.
\end{abstract}

{\bf Key Words}:  Group rings; self-dual codes; codes over rings; extremal codes; four circulant constructions.

{\bf MSC 2010 Classification:} Primary: 94B05, Secondary: 16S34

\section{Introduction}
Constructions coming from group rings that have emerged in recent works have extended the tools for binary self-dual codes. Many of the classical constructions have been found to be related to certain groups and considering different group rings have brought new construction methods into the literature. This connection was first highlighted in \cite{GKTY}, where a connection between certain group ring elements called unitary units and self-dual codes was established and the connection was used to produce many self-dual codes. The main idea is to take a matrix of the form $[I_n|A]$, where $A$ is an $n\times n$ matrix that has a special structure depending on the group rings used.

 Previously, group ring elements were used in a different way to construct certain special codes. In \cite{BLM}, an ideal of the group algebra $\FF_2S_4$ was used to construct the well-known binary extended Golay code where $S_4$ is the symmetric group on $4$ elements. In \cite{Hurley1}, an isomorphism between a group ring and a certain subring of the $n \times n$ matrices over the ring was established. This isomorphism was used to produce special self-dual codes in \cite{Hurley2,IT}.

The inspiration for this work comes from modifying the four-circulant construction, which was first introduced in \cite{BFC}: Let $G$ be the matrix

\begin{equation*}
\left[\ \ I_{2n}\ \ %
\begin{array}{|cc}
A & B \\
-B^{T} & A^{T}%
\end{array}%
\right]
\end{equation*}
where $A$ and $B$ are circulant matrices. Then the code generated by $G$ over $\F_p$ is self-dual if and only if $AA^T+BB^T = -I_n$. Note that when the alphabet is a ring of characteristic $2$, then the matrix and the conditions can be written in an alternative form, where the negative signs disappear.

In this work, we will consider constructing self dual codes from the following variation of the four-circulant matrix: Consider the matrix

\begin{equation*}
\left[\ \ I_{2n}\ \
\begin{array}{|cc}
A & B \\
B^{T} & A^{T}%
\end{array}%
\right]
\end{equation*}
where both $A$ and $B$  are matrices that arise from group rings. Depending on the groups, the matrices will usually not be circulant matrices, which is a variation from the usual four-circulant construction. Under this construction, we establish the link between units/non-units in the group ring and corresponding self-dual codes. Using this connection for some particular examples of groups over the field $\F_2$ and the rings $\F_2+u\F_2$ and $\F_4+u\F_4$ we are able to construct many extremal and optimal binary self-dual codes of different lengths. In particular we construct five new self-dual codes of parameters $[56, 28,10]$, twenty-three extremal binary self-dual codes of length 68 with new weight enumerators and fifteen new self-dual codes of parameters $[80,40,14]$.

The rest of the work is organized as follows. In section 2, we give the necessary background on codes, the alphabets we use and the group rings. In section 3, we give the constructions and the theoretical results about the group ring elements that lead to self-dual codes. In sections 4 and 5, we apply the construction methods to produce the numerical results, using MAGMA (\cite{MAGMA}). The paper ends with concluding remarks and possible further research directions.

\section{Preliminaries}

In this section, we will define self-dual codes over Frobenius rings of characteristic $2$. We will recall some of the properties of the family of rings called $R_k$ and the ring $\FF_4 +u \FF_4$. This section
concludes with an introduction to group rings and an established isomorphism between a group ring and a certain subring of the $n \times n$ matrices over a ring.

\subsection{Self-Dual codes}

Throughout this work, all rings are assumed to be commutative, finite, Frobenius rings with a multiplicative identity.

 A code over a finite commutative ring  $R$ is said to be any subset $C$ of $R^n$.  When the code is a submodule of the ambient space then the code is said to be linear.  To the ambient space, we attach the usual inner-product, specifically  $[\vv,\vw]= \sum v_i w_i$.
 The dual with respect to this inner-product is defined as $C^\perp = \{\vw \ | \ \vw \in R^n, \ [\vw,\vv]=0, \forall \vv \in C \}.$ Since the ring is Frobenius we have that for all linear codes over $R$, $|C| |C^\perp| = |R|^n$. If a code satisfies $C=C^\perp$ then the code $C$ is said to be self-dual.  If $C \subseteq C^\perp$ then the code is said to be self-orthogonal.

For binary codes, a self-dual code where all weights are divisible by 4, is said to be Type~II and the code is said to be Type~I otherwise. The bounds on the minimum distances for binary self-dual codes are give in the following theorem:
\begin{thm}
$($\cite{Rains}$)$ Let $d_{I}(n)$ and $d_{II}(n)$ be the minimum distance of
a Type I and Type II binary code of length $n$, respectively. Then
\begin{equation*}
d_{II}(n)\leq 4\lfloor \frac{n}{24}\rfloor +4
\end{equation*}%
and
\begin{equation*}
d_{I}(n)\leq \left\{
\begin{array}{ll}
4\lfloor \frac{n}{24}\rfloor +4 & \text{if $n\not\equiv 22\pmod{24}$} \\
4\lfloor \frac{n}{24}\rfloor +6 & \text{if $n\equiv 22\pmod{24}$.}%
\end{array}%
\right.
\end{equation*}
\end{thm}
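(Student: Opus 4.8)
The plan is to reduce the whole statement to the combinatorics of weight enumerators via Gleason's theorem, and then to run the Mallows--Sloane argument for the Type~II bound and Rains's shadow refinement for the Type~I bound. Recall first that, by invariant theory, the weight enumerator $W_C(x,y)=\sum_i A_i x^{n-i}y^i$ of a binary self-dual code is fixed by the order-$16$ group generated by the MacWilliams transform $(x,y)\mapsto\tfrac{1}{\sqrt2}(x+y,x-y)$ together with $(x,y)\mapsto(x,-y)$, and in the Type~II case by the larger group obtained by adjoining $(x,y)\mapsto(x,iy)$; by Molien's theorem the corresponding invariant rings are polynomial, so
\[
W_C\in\CC\!\left[\,x^2+y^2,\ x^2y^2(x^2-y^2)^2\,\right]\quad(\text{Type~I}),
\]
\[
W_C\in\CC\!\left[\,x^8+14x^4y^4+y^8,\ x^4y^4(x^4-y^4)^4\,\right]\quad(\text{Type~II}).
\]
Matching degrees, $W_C$ is a linear combination of $\lfloor n/8\rfloor+1$ basis monomials in the Type~I case and of $\lfloor n/24\rfloor+1$ in the Type~II case, and these coefficients are the free parameters one gets to play with.

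For a Type~II code every nonzero weight is a multiple of $4$, so imposing $A_0=1$ and $A_4=A_8=\cdots=A_{4\lfloor n/24\rfloor}=0$ is exactly $\lfloor n/24\rfloor+1$ linear conditions on the $\lfloor n/24\rfloor+1$ parameters. Since the $j$-th basis monomial is the first one to contribute to weight $4j$, this system is triangular with invertible diagonal, hence has a unique solution --- the extremal weight enumerator $W_n^{\mathrm{ext}}$ --- whose mere existence yields $d\ge 4\lfloor n/24\rfloor+4$. Now any self-dual code with $d>4\lfloor n/24\rfloor+4$ has $d\ge 4\lfloor n/24\rfloor+8$ (weights are multiples of $4$) and must have $W_C=W_n^{\mathrm{ext}}$ (the conditions above already pin it down), so it suffices to check that the coefficient of $y^{4\lfloor n/24\rfloor+4}$ in $W_n^{\mathrm{ext}}$ is strictly positive; this positivity, which is the real content of the Type~II bound, I would extract from an explicit contour-integral (Lagrange-inversion) formula for the coefficients of $W_n^{\mathrm{ext}}$.

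For a Type~I code the extra tool is the shadow. Let $C_0\subseteq C$ be the index-$2$ doubly-even subcode and set $S:=C_0^{\perp}\setminus C$. A short MacWilliams computation gives $W_{C_0}(x,y)=\tfrac12\big(W_C(x,y)+W_C(x,iy)\big)$ and hence
\[
W_S(x,y)=2^{-n/2}\,W_C\big(x+y,\ i(x-y)\big),
\]
which on the two generators acts by $x^2+y^2\mapsto 4xy$ and $x^2y^2(x^2-y^2)^2\mapsto-4(x^4-y^4)^2$, so $W_S$ is again an explicit linear combination of the same parameters. The point is that $W_S$ must have non-negative integer coefficients, and since $\1v\in C$ (every codeword of a Type~I self-dual code has even weight) the shadow is stable under adding $\1v$, so its support is symmetric under $i\mapsto n-i$ and concentrated in weights $\equiv n/2\pmod 4$. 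Imposing on $W_C$ the extremal conditions $A_0=1$, $A_1=\cdots=A_{d-1}=0$ and combining them with the non-negativity of the coefficients of both $W_C$ and $W_S$ and the restricted support of $W_S$, one finds the parameters over-constrained as soon as $d\ge 4\lfloor n/24\rfloor+6$, except in the residue class $n\equiv 22\pmod{24}$, where exactly one consistent solution survives, with $d=4\lfloor n/24\rfloor+6$, and larger $d$ stays impossible; this produces the $+6$ bound.

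The main obstacle in both halves is the sign analysis of these extremal enumerators: showing that the first ``new'' coefficient of $W_n^{\mathrm{ext}}$ is positive, and, for Type~I, tracking how the forced coefficients of $W_S$ become negative outside the class $n\equiv 22\pmod{24}$. I expect to handle this exactly as Rains and Conway--Sloane do, by representing the relevant coefficients as contour integrals of the corresponding generating functions and reducing to monotonicity and positivity estimates for binomial-type sums; the modulus $24$ emerges directly from the generator degrees $8$ and $24$ --- equivalently, from where $\lfloor n/8\rfloor+1$ and $\lfloor n/24\rfloor+1$ cross as the available parameters run out.
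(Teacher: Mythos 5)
First, a point of comparison: the paper offers no proof of this statement at all --- it is quoted directly from Rains (\cite{Rains} in the bibliography) as background, so there is no internal argument to measure your proposal against. Judged on its own terms, your reductions are the standard and correct ones: Gleason's theorem for the two invariant rings, the count of $\lfloor n/8\rfloor+1$ (Type I) versus $\lfloor n/24\rfloor+1$ (Type II) free parameters, the triangular system that pins down a unique extremal Type II enumerator, the shadow $S=C_0^{\perp}\setminus C$ with $W_S(x,y)=2^{-n/2}W_C\bigl(x+y,\,i(x-y)\bigr)$, the invariance of $S$ under adding the all-ones vector, and the congruence $\mathrm{wt}(s)\equiv n/2 \pmod 4$ are all right, and they are exactly the ingredients of the Conway--Sloane/Rains proof.

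The genuine gap is that the proposal stops where the theorem actually begins. For Type II you reduce the bound to the strict positivity of the coefficient of $y^{4\lfloor n/24\rfloor+4}$ in the extremal enumerator, and for Type I you assert that the extremal conditions together with nonnegativity of the coefficients of $W_C$ and $W_S$ and the support restriction on $W_S$ ``over-constrain'' the parameters once $d\geq 4\lfloor n/24\rfloor+6$, except when $n\equiv 22\pmod{24}$ --- but neither claim is argued; both are explicitly deferred to ``handle this exactly as Rains and Conway--Sloane do.'' That sign analysis is the entire content of the theorem, not a routine estimate: the Type II positivity requires the explicit Lagrange-inversion (B\"urmann) formula for the extremal coefficients and a genuine inequality, and the Type I half is far more than a parameter count --- one must express $W_C$ and $W_S$ in a common set of parameters, identify which specific coefficient (of the code or of the shadow) is forced to be negative or otherwise inadmissible when $d$ is one step too large, and show that this forcing fails precisely in the residue class $n\equiv 22\pmod{24}$, which is where the $+6$ exception comes from. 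As written, the proposal is a correct roadmap of the published proof rather than a proof: every step you carry out is sound, but the decisive estimates are cited, not established, so the argument as it stands does not yet prove the stated bounds.
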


Self-dual codes that meet these bounds are called \textit{extremal}. In sections 4 and 5, we will construct extremal binary self-dual codes of different lengths.

\subsection{$R_{k}$ family of rings}
An important class of rings that have been used extensively in constructing codes is the ring family of $R_k$, which has been introduced in \cite{Rkpaper}. We will be mainly using $R_0=\F_2$ and $R_1=\F_2+u\F_2$ in this work, however some of the theoretical results will be true for all $R_k$, which is why we would like to give a brief description of the rings, mainly from \cite{Rkpaper} and \cite{Rkpaper2}.  For $k\geq 1$, define
 \begin{equation} \label{defineRk}
R_k = \FF_2[u_1,u_2,\dots,u_k] / \langle u_i^2, u_iu_j- u_j u_i \rangle,
\end{equation}
which can also be defined recursively as:
 \begin{equation} \label{defineRk2}
R_k = R_{k-1}[u_k] / \langle u_k^2, u_ku_j- u_j u_k \rangle =
R_{k-1}+u_kR_{k-1}.
\end{equation}

For any subset $A \subseteq \{1,2, \dots, k\}$ we will fix
\begin{equation}
u_A := \prod_{i \in A}u_i
\end{equation}
with the convention that $u_{\emptyset} = 1.$
Then any element of $R_k$ can be represented as
\begin{equation} \label{eq4}
\sum_{A \subseteq \{1, \dots, k\}}c_Au_A, \qquad c_A \in \mathbb{F}_2.
\end{equation}

With this representation of the elements, we have
$$
u_A u_B = \left \{ \begin{array}{ll}
0 & \textrm{if} A \cap B \neq \emptyset\\
u_{A\cup B} & \textrm {if} A\cap B = \emptyset
\end{array} \right .
$$
and
$$\left ( \sum_{A}c_Au_A \right )\left ( \sum_{B}d_Bu_B\right) = \sum_{A,B \subseteq \{1,\dots,k\},  \\
A\cap B = \emptyset} c_Ad_Bu_{A \cup B}.$$

It is shown in \cite{Rkpaper} that the ring family $R_k$ is a commutative ring with $|R_k| = 2^{(2^k)}.$

A Gray map from $R_k$ to $\FF_2^{2^k}$ was defined inductively starting with the map on
$R_1$:
$\phi_1(a+bu_1) = (b,a+b).$
We recall that if $c \in R_k$, $c$ can be written as
$c= a + b u_{k-1}, a,b \in R_{k-1}.$
Then  \begin{equation} \phi_k(c) = (\phi_{k-1}(b),   \phi_{k-1}(a+b)).
\end{equation}

The map $\phi_k$ is a distance preserving map and the following is shown in \cite{Rkpaper2}.

\begin{thm}
Let $C$ be a self-dual code over $R_k$, then $\phi_k(R_k)$ is a binary self-dual code of length $2^kn.$
\end{thm}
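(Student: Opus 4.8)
\ The plan is to combine a cardinality count with an induction on $k$, working throughout with the binary code $\phi_k(C)$ (the statement's ``$\phi_k(R_k)$'' being understood as $\phi_k(C)$). First I would isolate the two properties of $\phi_k$ that are actually needed: it is additive, hence $\F_2$-linear since we work over $\F_2$, and it is injective. Both follow by an immediate induction from $\phi_1(a+bu_1)=(b,a+b)$ together with the recursion $\phi_k(a+bu_k)=(\phi_{k-1}(b),\phi_{k-1}(a+b))$, where $c=a+bu_k$ is the decomposition coming from $R_k=R_{k-1}+u_kR_{k-1}$. Consequently $\phi_k(C)$ is a binary linear code of length $2^kn$ with $|\phi_k(C)|=|C|$. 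Since $R_k$ is Frobenius and $C=C^\perp$, we get $|C|^2=|R_k|^n=2^{2^kn}$, so $|\phi_k(C)|=2^{2^{k-1}n}=2^{(2^kn)/2}$, which is exactly the size of a self-dual binary code of that length. Hence it suffices to prove $\phi_k(C)\subseteq\phi_k(C)^\perp$; then $|\phi_k(C)|\,|\phi_k(C)^\perp|=2^{2^kn}$ forces $\phi_k(C)=\phi_k(C)^\perp$.

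The core step is a lemma converting the $R_k$-valued inner product into the binary one: I would show, by induction on $k$, that the $\F_2$-linear functional $T_k\colon R_k\to\F_2$ sending $\sum_Ac_Au_A$ to $\sum_Ac_A$ satisfies $[\phi_k(x),\phi_k(y)]=T_k(xy)$ for all $x,y\in R_k$. The case $k=1$ is a one-line computation: for $x=a+bu_1$, $y=c+du_1$ one has $[\phi_1(x),\phi_1(y)]=bd+(a+b)(c+d)=ac+ad+bc$ in $\F_2$, while $xy=ac+(ad+bc)u_1$, so both sides equal $T_1(xy)$. For the inductive step write $x=a+bu_k$, $y=c+du_k$ with $a,b,c,d\in R_{k-1}$, so that $xy=ac+(ad+bc)u_k$. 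Expanding along the concatenation,
\[
[\phi_k(x),\phi_k(y)]=[\phi_{k-1}(b),\phi_{k-1}(d)]+[\phi_{k-1}(a+b),\phi_{k-1}(c+d)],
\]
and then applying the induction hypothesis, the additivity of $\phi_{k-1}$, and the $\F_2$-linearity of $T_{k-1}$ collapses the right-hand side to $T_{k-1}(ac)+T_{k-1}(ad+bc)=T_k(xy)$.

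Granting the lemma, self-orthogonality is immediate. For $\vx=(x_1,\dots,x_n)$ and $\vy=(y_1,\dots,y_n)$ in $C=C^\perp$ we have $\sum_ix_iy_i=0$ in $R_k$, whence
\[
[\phi_k(\vx),\phi_k(\vy)]=\sum_{i=1}^n[\phi_k(x_i),\phi_k(y_i)]=\sum_{i=1}^nT_k(x_iy_i)=T_k\!\Big(\sum_{i=1}^nx_iy_i\Big)=T_k(0)=0 .
\]
Thus $\phi_k(C)$ is self-orthogonal, and by the cardinality count of the first paragraph it is in fact self-dual of length $2^kn$.

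I expect the only genuine obstacle to be the inductive verification of $[\phi_k(x),\phi_k(y)]=T_k(xy)$, and in particular the bookkeeping in the inductive step, where one must simultaneously use that $\phi_{k-1}$ respects addition and that $T_{k-1}$ is $\F_2$-linear; the remaining ingredients (the size count, the linearity and the injectivity of $\phi_k$) are routine inductions. A minor caveat is to apply the recursion with $u_k$, not $u_{k-1}$, in the splitting $c=a+bu_k$.
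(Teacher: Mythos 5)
Your proposal is correct, but there is nothing inside this paper to compare it with: the theorem is stated here without proof and attributed to \cite{Rkpaper2} (``the following is shown in \cite{Rkpaper2}''), so any comparison is with the cited literature rather than with an in-paper argument. Your route is the standard one for such Gray maps --- show the map preserves orthogonality and then count --- and the details check out: the key identity $[\phi_k(x),\phi_k(y)]=T_k(xy)$, with $T_k$ the sum-of-coefficients functional, holds by exactly the induction you describe, since in the base case $bd+(a+b)(c+d)=ac+ad+bc=T_1(xy)$ and in the inductive step $xy=ac+(ad+bc)u_k$ together with $T_k(z_0+z_1u_k)=T_{k-1}(z_0)+T_{k-1}(z_1)$ collapses everything correctly (the two $T_{k-1}(bd)$ terms cancel over $\F_2$); additivity and injectivity of $\phi_k$ give $|\phi_k(C)|=|C|$, and the Frobenius property quoted in Section 2.1 gives $|C|=2^{2^{k-1}n}$, so self-orthogonality of the image plus $|\phi_k(C)|\,|\phi_k(C)^\perp|=2^{2^kn}$ forces self-duality. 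Your reading of the statement's ``$\phi_k(R_k)$'' as the obvious typo for $\phi_k(C)$ is also the right one. The only cosmetic remark is that you could note $\phi_k$ is in fact a bijection of $R_k^n$ onto $\F_2^{2^kn}$ (injectivity plus equal cardinalities), though injectivity alone suffices for your count; formulating orthogonality-preservation through the functional $T_k$ is a clean way to package what the cited reference proves.
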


The following lemma describes a property of units and non-units in $R_k$.
\begin{lem} $($ \cite{Rkpaper}$)$
\label{lemma:rk} For an element $\alpha\in R_k$ we have
\begin{equation}\label{unitsquare}
 \alpha^2 = \left \{
\begin{array}{ll}
1 & \textrm{if $\alpha$ is a unit}
\\
0 & \textrm {otherwise.}
\end{array}\right.
\end{equation}
\end{lem}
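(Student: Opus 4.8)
The plan is to split an arbitrary element of $R_k$ into its constant part and its part lying in the maximal ideal, show that the latter always squares to zero because we are in characteristic $2$, and then read off both the unit and the non-unit cases from the same computation.

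Using the representation \eqref{eq4}, write $\alpha = c_\emptyset\cdot\1v + \beta$ with $\beta = \sum_{A\neq\emptyset}c_Au_A$, an element of the ideal $\mathfrak{m}=\langle u_1,\dots,u_k\rangle$. The first step is to compute $\beta^2$ using the multiplication rule recalled just above, namely $u_Au_B=u_{A\cup B}$ if $A\cap B=\emptyset$ and $0$ otherwise:
\[
\beta^2=\sum_{\substack{A,B\neq\emptyset\\ A\cap B=\emptyset}}c_Ac_B\,u_{A\cup B}.
\]
The key observation is that every pair $(A,B)$ occurring in this sum has $A\neq B$, since a nonempty set is never disjoint from itself; hence $(A,B)\mapsto(B,A)$ is a fixed-point-free involution of the index set, pairing each term $c_Ac_Bu_{A\cup B}$ with the identical term $c_Bc_Au_{B\cup A}$. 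Over $\FF_2$ these cancel in pairs, so $\beta^2=0$.

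Given this, the rest is immediate. Since $\mathrm{char}(R_k)=2$ we get $\alpha^2=c_\emptyset^2\,\1v+\beta^2=c_\emptyset\,\1v$, using $c_\emptyset\in\FF_2$. If $c_\emptyset=1$ then $\alpha^2=\1v$, which in particular exhibits $\alpha$ as its own inverse, so $\alpha$ is a unit. If $c_\emptyset=0$ then $\alpha=\beta$ and $\alpha^2=0$; such an $\alpha$ cannot be a unit (a unit with square $0$ would force $\1v=0$ in the nonzero ring $R_k$), and $\alpha=0$ is of course also a non-unit. Thus $\alpha^2=1$ exactly when $\alpha$ is a unit and $\alpha^2=0$ otherwise.

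There is no real obstacle here; the only point needing care is the bookkeeping that makes the cross terms cancel — which is precisely where characteristic $2$ is used — together with the (convenient) fact that this same identity simultaneously yields the characterization of units, so the two halves of the statement need not be argued separately. An alternative, equally short route is induction on $k$ via the recursion $R_k=R_{k-1}+u_kR_{k-1}$ of \eqref{defineRk2}: writing $\alpha=a+u_kb$ with $a,b\in R_{k-1}$ gives $\alpha^2=a^2$ because $u_k^2=0$ and $2=0$, while $\alpha$ is a unit in $R_k$ iff $a$ is a unit in $R_{k-1}$, the base case $R_0=\FF_2$ being trivial.
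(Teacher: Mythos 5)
Your proof is correct, and there is nothing in the paper to compare it against: the lemma is stated with a citation to \cite{Rkpaper} and no proof is given here. Your argument — splitting $\alpha=c_\emptyset+\beta$ with $\beta$ in the ideal $\langle u_1,\dots,u_k\rangle$, showing $\beta^2=0$ by the characteristic-$2$ cancellation of the disjoint cross terms, and reading off both the unit case ($\alpha^2=1$, so $\alpha$ is its own inverse) and the non-unit case ($\alpha^2=0$) from $\alpha^2=c_\emptyset$ — is the standard one for this family of rings, and the inductive alternative via $R_k=R_{k-1}+u_kR_{k-1}$ is equally valid.
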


 The next result which was introduced in \cite{dougherty} can easily be extended to be true for $R_k$ as well.
\begin{thm}\label{extension}
Let $\mathcal{C}$ be a self-dual code
over $R_k$ of length $n$ and $G=(r_{i})$ be a $j\times n$ generator
matrix for $\mathcal{C}$, where $r_{i}$ is the $i$-th row of $G$, $1\leq
i\leq k$. Let $c$ be a unit in $R_k$  and $X$ be
a vector in ${R_k}^{n}$ with $\left\langle X,X\right\rangle =1$. Let $%
y_{i}=\left\langle r_{i},X\right\rangle $ for $1\leq i\leq k$. Then the
following matrix%
\begin{equation*}
\left(
\begin{array}{cc|c}
1 & 0 & X \\ \hline
y_{1} & cy_{1} & r_{1} \\
\vdots  & \vdots  & \vdots  \\
y_{k} & cy_{k} & r_{k}%
\end{array}%
\right) ,
\end{equation*}%
generates a self-dual code $\mathcal{C}^{\prime }$ over $R_k$ of
length $n+2$.
\end{thm}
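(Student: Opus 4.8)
The plan is to check two properties of the code $\mathcal{C}'$ generated by the displayed matrix: that it is self-orthogonal, and that $|\mathcal{C}'| = |R_k|^{(n+2)/2}$. Granting both, the Frobenius identity $|\mathcal{C}'|\,|\mathcal{C}'^{\perp}| = |R_k|^{n+2}$ combined with $\mathcal{C}' \subseteq \mathcal{C}'^{\perp}$ forces $|\mathcal{C}'^{\perp}| = |\mathcal{C}'|$, whence $\mathcal{C}' = \mathcal{C}'^{\perp}$; that is, $\mathcal{C}'$ is self-dual of length $n+2$.

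For self-orthogonality it is enough, by bilinearity of the inner product, to verify that any two rows of the generator matrix are orthogonal, self-pairs included; each case also uses that $R_k$ has characteristic $2$. Pairing the top row $(1,0,X)$ with itself gives $1\cdot 1 + 0\cdot 0 + \langle X,X\rangle = 1 + 1 = 0$, since $\langle X,X\rangle = 1$. Pairing $(1,0,X)$ with a row $(y_i, c y_i, r_i)$ gives $y_i + 0 + \langle X, r_i\rangle = y_i + y_i = 0$, by the definition $y_i = \langle r_i, X\rangle$. Pairing $(y_i, c y_i, r_i)$ with $(y_\ell, c y_\ell, r_\ell)$ gives $y_i y_\ell + c^2 y_i y_\ell + \langle r_i, r_\ell\rangle = (1 + c^2)\,y_i y_\ell$, since $\langle r_i, r_\ell\rangle = 0$ because $\mathcal{C}$ is self-dual; and as $c$ is a unit, Lemma \ref{lemma:rk} gives $c^2 = 1$, so the expression vanishes. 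This last case is precisely where the hypothesis that $c$ be a unit enters.

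For the cardinality I would parametrize the codewords of $\mathcal{C}'$ explicitly. A general $R_k$-combination $a\,(1,0,X) + \sum_i b_i\,(y_i, c y_i, r_i)$ equals $\bigl(a + \langle \vv, X\rangle,\ c\langle \vv, X\rangle,\ aX + \vv\bigr)$, where $\vv := \sum_i b_i r_i$ lies in $\mathcal{C}$ and $\langle \vv, X\rangle = \sum_i b_i y_i$ by linearity; since the $r_i$ generate $\mathcal{C}$, as $a$ ranges over $R_k$ and $\vv$ over $\mathcal{C}$ this lists every element of $\mathcal{C}'$. The induced map $R_k \times \mathcal{C} \to \mathcal{C}'$ is therefore onto, and it is also injective: from the second coordinate one recovers $\langle \vv, X\rangle$ (here again using that $c$ is a unit), then $a$ from the first coordinate, and finally $\vv$ from the last $n$ coordinates. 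Hence $|\mathcal{C}'| = |R_k|\cdot|\mathcal{C}| = |R_k|\cdot|R_k|^{n/2} = |R_k|^{(n+2)/2}$, using that $\mathcal{C}$ is self-dual.

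The one genuinely delicate point is this cardinality step: over a ring with zero divisors one cannot simply count $|R_k|$ choices per generator row, so the explicit description of codewords — together with the use of the unit $c$ to make the parametrizing map injective — is what the argument really hinges on. Everything else reduces to bilinearity, characteristic $2$, and Lemma \ref{lemma:rk}.
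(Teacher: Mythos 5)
Your proof is correct. Note that the paper itself supplies no argument for this theorem: it is quoted from \cite{dougherty} with the remark that the extension to $R_k$ is straightforward, so there is no in-paper proof to compare against. Your argument is exactly the standard one for this kind of extension result, and it is complete: self-orthogonality reduces by bilinearity to the rows, where characteristic $2$, $\langle X,X\rangle=1$, $y_i=\langle r_i,X\rangle$, $\langle r_i,r_\ell\rangle=0$, and $c^2=1$ for the unit $c$ (Lemma \ref{lemma:rk}) make every pairing vanish; and the size count $|\mathcal{C}'|=|R_k|\cdot|\mathcal{C}|$ combined with the Frobenius identity $|\mathcal{C}'|\,|\mathcal{C}'^{\perp}|=|R_k|^{n+2}$ and the inclusion $\mathcal{C}'\subseteq\mathcal{C}'^{\perp}$ forces equality. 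The explicit parametrization $(a,\vv)\mapsto\bigl(a+\langle\vv,X\rangle,\ c\langle\vv,X\rangle,\ aX+\vv\bigr)$, with $(a,\vv)$ recovered from a codeword via $c^{-1}$, is the right way to handle the zero-divisor issue that a naive ``count $|R_k|$ choices per row'' argument would miss, and it is also precisely the second place (besides $c^2=1$) where the hypothesis that $c$ is a unit is genuinely needed.
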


\subsection{The ring $\FF_4+u\FF_4$}

Let $\mathbb{F}_{4}=\mathbb{F}_{2}\left( \omega \right) $ be the quadratic
field extension of $\mathbb{F}_2$, where $\omega ^{2}+\omega +1=0$. The ring
$\mathbb{F}_{4}+u\mathbb{F}_{4}$ is defined via $u^{2}=0$. Note that $%
\mathbb{F}_4+u\mathbb{F}_4$ can be viewed as an extension of $R_1=\mathbb{F}_2+u%
\mathbb{F}_2$ and so we can describe any element of $\mathbb{F}_4+u\mathbb{F}%
_4$ in the form $\omega a+\bar{\omega}b$ uniquely, where $a,b \in \mathbb{F}%
_2+u\mathbb{F}_2$.

A linear code $C$ of length $n$ over $\mathbb{F}_{4}+u\mathbb{F}_{4}$ is an $\left(
\mathbb{F}_{4}+u\mathbb{F}_{4}\right) $-submodule of $\left( \mathbb{F}_{4}+u%
\mathbb{F}_{4}\right) ^{n}$. In \cite{gaborit} and \cite{dougherty2} the following Gray maps were introduced;%
\begin{equation*}
\begin{tabular}{l||l}
$\psi _{\mathbb{F}_{4}}:\left( \mathbb{F}_{4}\right) ^{n}\rightarrow \left(
\mathbb{F}_{2}\right) ^{2n}$ & $\varphi _{\mathbb{F}_{2}+u\mathbb{F}%
_{2}}:\left( \mathbb{F}_{2}+u\mathbb{F}_{2}\right) ^{n}\rightarrow \mathbb{F}%
_{2}^{2n}$ \\
$a\omega +b\overline{\omega }\mapsto \left( a,b\right) \text{, \ }a,b\in
\mathbb{F}_{2}^{n}$ & $a+bu\mapsto \left( b,a+b\right) \text{, \ }a,b\in
\mathbb{F}_{2}^{n}.$%
\end{tabular}%
\end{equation*}
Note that $\varphi_{\mathbb{F}_{2}+u\mathbb{F}_{2}}$ is the same map as $\phi_1$ described before.
Those were generalized to the following maps in \cite{ling};
\begin{equation*}
\begin{tabular}{l||l}
$\psi _{\mathbb{F}_{4}+u\mathbb{F}_{4}}:\left( \mathbb{F}_{4}+u\mathbb{F}%
_{4}\right) ^{n}\rightarrow \left( \mathbb{F}_{2}+u\mathbb{F}_{2}\right)
^{2n}$ & $\varphi _{\mathbb{F}_{4}+u\mathbb{F}_{4}}:\left( \mathbb{F}_{4}+u%
\mathbb{F}_{4}\right) ^{n}\rightarrow \mathbb{F}_{4}^{2n}$ \\
$a\omega +b\overline{\omega }\mapsto \left( a,b\right) \text{, \ }a,b\in
\left( \mathbb{F}_{2}+u\mathbb{F}_{2}\right) ^{n}$ & $a+bu\mapsto \left(
b,a+b\right) \text{, \ }a,b\in \mathbb{F}_{4}^{n}$%
\end{tabular}%
\end{equation*}%
These maps preserve orthogonality in the corresponding alphabets. The binary
images $\varphi _{\mathbb{F}_{2}+u\mathbb{F}_{2}}\circ \psi _{\mathbb{F}%
_{4}+u\mathbb{F}_{4}}\left( C\right) $ and $\psi _{\mathbb{F}_{4}}\circ
\varphi _{\mathbb{F}_{4}+u\mathbb{F}_{4}}\left( C\right) $ are equivalent.
The Lee weight of an element is defined to be the Hamming weight of its
binary image.

\begin{prop}
$($\cite{ling}$)$ Let $C$ be a code over $\mathbb{F}_{4}+u\mathbb{F}_{4}$. If $C$ is self-orthogonal, so are $\psi _{\mathbb{F}_{4}+u\mathbb{F}%
_{4}}\left( C\right) $ and $\varphi _{\mathbb{F}_{4}+u\mathbb{F}_{4}}\left(
C\right) $. $C$ is a Type I (resp. Type II) code over $\mathbb{F}_{4}+u%
\mathbb{F}_{4}$ if and only if $\varphi _{\mathbb{F}_{4}+u\mathbb{F}%
_{4}}\left( C\right) $ is a Type I (resp. Type II) $\mathbb{F}_{4}$-code, if
and only if $\psi _{\mathbb{F}_{4}+u\mathbb{F}_{4}}\left( C\right) $ is a
Type I (resp. Type II) $\mathbb{F}_{2}+u\mathbb{F}_{2}$-code. Furthermore,
the minimum Lee weight of $C$ is the same as the minimum Lee weight of $\psi
_{\mathbb{F}_{4}+u\mathbb{F}_{4}}\left( C\right) $ and $\varphi _{\mathbb{F}%
_{4}+u\mathbb{F}_{4}}\left( C\right) $.
\end{prop}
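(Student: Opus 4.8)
The plan is to reduce the whole statement to two elementary facts — that each Gray map preserves orthogonality, and that neither changes cardinalities — after which the assertions about type and minimum weight follow from the observation, already recorded above, that the two binary images $\varphi_{\mathbb{F}_2+u\mathbb{F}_2}\circ\psi_{\mathbb{F}_4+u\mathbb{F}_4}$ and $\psi_{\mathbb{F}_4}\circ\varphi_{\mathbb{F}_4+u\mathbb{F}_4}$ agree up to equivalence.

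First I would verify orthogonality is preserved. For $\psi=\psi_{\mathbb{F}_4+u\mathbb{F}_4}$, write $\vx=\vu\omega+\vw\overline{\omega}$ and $\vy=\vc\omega+\vd\overline{\omega}$ with $\vu,\vw,\vc,\vd\in(\mathbb{F}_2+u\mathbb{F}_2)^{n}$, and expand $[\vx,\vy]$ using $\omega^{2}=\overline{\omega}$, $\overline{\omega}^{2}=\omega$, $\omega\overline{\omega}=1$ and $1=\omega+\overline{\omega}$. Collecting the $\omega$- and $\overline{\omega}$-parts and using uniqueness of this representation, $[\vx,\vy]=0$ is equivalent to the pair of scalar identities $\sum w_id_i+\sum(u_id_i+w_ic_i)=0$ and $\sum u_ic_i+\sum(u_id_i+w_ic_i)=0$; adding them in characteristic $2$ gives $\sum u_ic_i+\sum w_id_i=0$, which is exactly $[\psi\vx,\psi\vy]=0$. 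For $\varphi=\varphi_{\mathbb{F}_4+u\mathbb{F}_4}$, write a codeword as $a+bu$ with $a,b\in\mathbb{F}_4^{n}$; since $u^{2}=0$, $[a+bu,\,c+du]=\sum a_ic_i+u\sum(a_id_i+b_ic_i)$, so orthogonality means $\sum a_ic_i=0$ and $\sum(a_id_i+b_ic_i)=0$, and a one-line expansion gives $[\varphi(a+bu),\varphi(c+du)]=\sum a_ic_i+\sum(a_id_i+b_ic_i)=0$. Hence $C\subseteq C^{\perp}$ forces $\psi(C)\subseteq\psi(C)^{\perp}$ and $\varphi(C)\subseteq\varphi(C)^{\perp}$, which is the first claim.

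Next I would upgrade this to self-duality. Both $\psi$ and $\varphi$ are injective linear maps, so $|\psi(C)|=|\varphi(C)|=|C|$. If $C$ is self-dual over $\mathbb{F}_4+u\mathbb{F}_4$ then the Frobenius identity $|C|\,|C^{\perp}|=16^{n}$ gives $|C|=16^{n/2}=2^{2n}$, which is precisely the cardinality $4^{n}=|\mathbb{F}_2+u\mathbb{F}_2|^{(2n)/2}=|\mathbb{F}_4|^{(2n)/2}$ of a self-dual code of length $2n$ over $\mathbb{F}_2+u\mathbb{F}_2$ and over $\mathbb{F}_4$; combined with the self-orthogonality just proved, $\psi(C)$ and $\varphi(C)$ are self-dual over their respective rings. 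Their binary images under $\varphi_{\mathbb{F}_2+u\mathbb{F}_2}=\phi_1$ and under $\psi_{\mathbb{F}_4}$ are therefore binary self-dual codes, and, being equivalent by the observation recalled above, have the same Hamming weight enumerator. Since the Lee weight over each of these three rings is by definition the Hamming weight of the relevant binary image, the conditions ``$C$ is Type II'', ``$\psi(C)$ is Type II'' and ``$\varphi(C)$ is Type II'' all say that this common binary image is doubly even; hence they are mutually equivalent, and the Type I assertions are the complementary cases among self-dual codes.

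Finally, the same chain of identifications yields the minimum-weight statement: the minimum Lee weight of $C$ equals the minimum Hamming weight of $\varphi_{\mathbb{F}_2+u\mathbb{F}_2}(\psi(C))$, which is the minimum Lee weight of $\psi(C)$, and by the equivalence of the two binary images it also equals the minimum Hamming weight of $\psi_{\mathbb{F}_4}(\varphi(C))$, i.e.\ the minimum Lee weight of $\varphi(C)$, so all three coincide. The only ingredient here that is not pure bookkeeping is the equivalence of the two binary images, which is already supplied; within the rest, the step I would be most careful about is the cardinality count, since that is what actually promotes ``self-orthogonal'' to ``self-dual'' in each target ring, and it must invoke the Frobenius identity $|C|\,|C^{\perp}|=|R|^{n}$ separately in $\mathbb{F}_4+u\mathbb{F}_4$, in $\mathbb{F}_2+u\mathbb{F}_2$ and in $\mathbb{F}_4$.
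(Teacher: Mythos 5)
The paper itself offers no proof of this proposition: it is quoted verbatim from the Ling--Sol\'{e} reference \cite{ling}, so there is no internal argument to compare yours against. Judged on its own, your verification is essentially sound. The coordinatewise computations showing that $\psi_{\mathbb{F}_{4}+u\mathbb{F}_{4}}$ and $\varphi_{\mathbb{F}_{4}+u\mathbb{F}_{4}}$ carry orthogonal pairs to orthogonal pairs are correct (your two scalar identities are exactly the $\omega$- and $\overline{\omega}$-components of the inner product, and their sum is the inner product of the images), the Frobenius counting $|C|\,|C^{\perp}|=|R|^{n}$ applied in $\mathbb{F}_{4}+u\mathbb{F}_{4}$, $\mathbb{F}_{2}+u\mathbb{F}_{2}$ and $\mathbb{F}_{4}$ correctly promotes self-orthogonality of the images to self-duality when $C$ is self-dual, and reducing the Type and minimum Lee weight assertions to the single binary image (using the equivalence of $\varphi_{\mathbb{F}_{2}+u\mathbb{F}_{2}}\circ\psi_{\mathbb{F}_{4}+u\mathbb{F}_{4}}(C)$ and $\psi_{\mathbb{F}_{4}}\circ\varphi_{\mathbb{F}_{4}+u\mathbb{F}_{4}}(C)$, which the paper records just before the proposition) is exactly the right mechanism, since Lee weight over each alphabet is by definition the Hamming weight of that binary image.

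The one loose end is the reverse direction of the biconditionals. Your orthogonality computation is one-directional per pair: $[\psi(\vx),\psi(\vy)]=0$ is only the \emph{sum} of the two scalar conditions equivalent to $[\vx,\vy]=0$, so from ``$\psi(C)$ is Type I/II'' (hence self-dual) you cannot yet conclude that $C$ is self-dual, which the assertion ``$C$ is Type I/II'' presupposes. This is patched by using the module structure of $C$: since $C$ is closed under multiplication by $\omega$ (resp.\ by $u$), apply the orthogonality of $\psi(C)$ to both pairs $\bigl(\psi(\omega\vx),\psi(\vy)\bigr)$ and $\bigl(\psi(\vx),\psi(\vy)\bigr)$ (resp.\ of $\varphi(C)$ to $\bigl(\varphi(u\vx),\varphi(\vy)\bigr)$ and $\bigl(\varphi(\vx),\varphi(\vy)\bigr)$); the two resulting equations recover both scalar conditions separately, giving $[\vx,\vy]=0$ for all $\vx,\vy\in C$, and your cardinality count then yields self-duality of $C$. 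With that remark added, your argument is a complete and correct proof of the quoted result.
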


\begin{cor}
Suppose that $C$ is a self-dual code over $\mathbb{F}_{4}+u\mathbb{F}_{4}$
of length $n$ and minimum Lee distance $d$. Then $\varphi _{\mathbb{F}_{2}+u%
\mathbb{F}_{2}}\circ \psi _{\mathbb{F}_{4}+u\mathbb{F}_{4}}\left( C\right) $
is a binary $\left[ 4n,2n,d\right] $ self-dual code. Moreover, $C$ and $%
\varphi _{\mathbb{F}_{2}+u\mathbb{F}_{2}}\circ \psi _{\mathbb{F}_{4}+u%
\mathbb{F}_{4}}\left( C\right) $ have the same weight enumerator. If $C$ is
Type I (Type II), then so is $\varphi _{\mathbb{F}_{2}+u\mathbb{F}_{2}}\circ
\psi _{\mathbb{F}_{4}+u\mathbb{F}_{4}}\left( C\right) $.
\end{cor}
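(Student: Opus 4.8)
The plan is to factor the binary image as a composite of the two Gray maps and to glue together the preceding Proposition (\cite{ling}) with the earlier theorem asserting that $\phi_k$ carries a self-dual code over $R_k$ to a binary self-dual code, applied here in the case $k=1$, where $\phi_1 = \varphi_{\mathbb{F}_2+u\mathbb{F}_2}$.

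First I would check that $D := \psi_{\mathbb{F}_4+u\mathbb{F}_4}(C)$ is a self-dual code over $\mathbb{F}_2+u\mathbb{F}_2$ of length $2n$. The coordinatewise rule $a\omega+b\bar\omega \mapsto (a,b)$ is an isomorphism of $(\mathbb{F}_2+u\mathbb{F}_2)$-modules from $\mathbb{F}_4+u\mathbb{F}_4$ onto $(\mathbb{F}_2+u\mathbb{F}_2)^2$, so $\psi_{\mathbb{F}_4+u\mathbb{F}_4}$ is a bijective $(\mathbb{F}_2+u\mathbb{F}_2)$-linear map on the ambient spaces; in particular $D$ is a linear code of length $2n$ with $|D| = |C|$. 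Since $\mathbb{F}_4+u\mathbb{F}_4$ is a Frobenius ring of order $16$ and $C = C^\perp$, the identity $|C|\,|C^\perp| = |\mathbb{F}_4+u\mathbb{F}_4|^n$ gives $|C| = 2^{2n}$, hence $|D| = 2^{2n}$. By the Proposition, $D$ is self-orthogonal; combining $|D|^2 = 2^{4n} = |\mathbb{F}_2+u\mathbb{F}_2|^{2n}$ with the Frobenius identity $|D|\,|D^\perp| = |\mathbb{F}_2+u\mathbb{F}_2|^{2n}$ forces $D = D^\perp$, i.e.\ $D$ is self-dual.

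Next I would apply $\varphi_{\mathbb{F}_2+u\mathbb{F}_2} = \phi_1$ to $D$. By the theorem quoted above in the case $k=1$, $\phi_1(D)$ is a binary self-dual code of length $2\cdot 2n = 4n$, and since $\phi_1$ is injective its cardinality equals $|D| = 2^{2n}$, so its dimension is $2n$. For the minimum distance, recall that by definition the Lee weight of $\vc \in (\mathbb{F}_4+u\mathbb{F}_4)^n$ is the Hamming weight of its binary image $\varphi_{\mathbb{F}_2+u\mathbb{F}_2}\circ\psi_{\mathbb{F}_4+u\mathbb{F}_4}(\vc)$; as the composite is a bijection on codewords, the minimum Hamming distance of $\varphi_{\mathbb{F}_2+u\mathbb{F}_2}\circ\psi_{\mathbb{F}_4+u\mathbb{F}_4}(C)$ equals the minimum Lee distance $d$ of $C$. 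Thus the binary image is a $[4n,2n,d]$ self-dual code.

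Finally, the weight enumerator claim is this same observation stated globally: $\varphi_{\mathbb{F}_2+u\mathbb{F}_2}\circ\psi_{\mathbb{F}_4+u\mathbb{F}_4}$ is a weight-preserving bijection from $C$ onto its image (Lee weight to Hamming weight), so the two weight enumerators agree term by term. The Type I / Type II assertion then follows at once, since all Lee weights of $C$ are divisible by $4$ if and only if all Hamming weights of the binary image are; alternatively one chains the Type I/II part of the Proposition with the $k=1$ theorem. I do not expect a genuine obstacle here: the argument is essentially bookkeeping, and the only point needing care is the first step, where ``self-orthogonal'' must be promoted to ``self-dual'' purely by cardinality counts, using that $\psi_{\mathbb{F}_4+u\mathbb{F}_4}$ is linear (not merely additive) over $\mathbb{F}_2+u\mathbb{F}_2$ and that the Frobenius property $|C|\,|C^\perp| = |R|^n$ applies to both rings.
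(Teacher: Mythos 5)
Your proof is correct and follows essentially the route the paper intends: the corollary is stated there without proof, as an immediate consequence of the quoted Proposition of Ling--Sol\'{e} combined with the Gray-map theorem for $R_k$ (here $k=1$, with $\phi_1=\varphi_{\mathbb{F}_2+u\mathbb{F}_2}$), and your write-up supplies exactly that bookkeeping. In particular, your promotion of self-orthogonality of $\psi_{\mathbb{F}_4+u\mathbb{F}_4}(C)$ to self-duality via the Frobenius cardinality identity is the right (and only nontrivial) step, so there is nothing to correct.
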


\subsection{Shorthand notations for elements of $\F_2+u\F_2$ and $\F_4+u\F_4$}
In subsequent sections we will be writing tables in which vectors with elements from the rings $\F_2+u\F_2$ and $\F_4+u\F_4$ will appear.
In order to avoid writing long vectors with elements that can be confused with other elements, we will be describing the elements of these rings in a shorthand way, which will make the tables more compact.

For elements of $\F_2+u\F_2$, $0, 1, u$ will be written as they are, while $1+u$ will be replaced by 3. So, for example a vector of the form $(1,1+u,0,0,u,1+u)$ will be described as $(1300u3)$.

For the elements of $\FF_4+u\FF_4$, we use the ordered basis $\{u \omega, \omega, u, 1 \}$ to express the elements of $\FF_4+u\FF_4$ as binary strings of length 4. Then we will use the hexadecimal number system to describe each element:

$0 \leftrightarrow 0000$, $1 \leftrightarrow 0001$, $2 \leftrightarrow 0010$, $3 \leftrightarrow 0011$,
$4 \leftrightarrow 0100$, $5 \leftrightarrow 0101$, $6 \leftrightarrow 0110$, $7 \leftrightarrow 0111$,
$8 \leftrightarrow 1000$, $9 \leftrightarrow 1001$, $A \leftrightarrow 1010$, $B \leftrightarrow 1011$,
$C \leftrightarrow 1100$, $D \leftrightarrow 1101$, $E \leftrightarrow 1110$, $F \leftrightarrow 1111$.\\

For example $1 + u \omega$ corresponds to $1001$, which is represented by the hexadecimal $9$, while $\omega+u\omega$ corresponds to $1100$, which is represented by $C$.

\subsection{Certain Matrices and Group Rings}

We start with description of circulant matrices and block circulant matrices, the details for which can be found in \cite{davis}.

\begin{defi}
A circulant matrix over a ring $R$ is a square $n \times n$
matrix, which takes the form
\[ \mbox{circ}(a_1,a_2,\dots,a_n) = \left( \begin{array}{ccccc }

a_1 & a_2 & a_3 & \hdots & a_n\\

a_n & a_1 & a_2 & \hdots &  a_{n-1}\\

a_{n-1} & a_n & a_1 & \hdots & a_{n-2}\\

\vdots & \vdots & \vdots & \ddots &\vdots \\

a_2 & a_3 & a_4 & \hdots & a_1

\end{array}
\right)\] \noindent where $a_i \in R$.
\end{defi}

\begin{defi}
A block circulant matrix over a ring $R$ is a square $kn \times kn$
matrix, which takes the form
\[ \mbox{CIRC}(A_1,A_2,\dots,A_n) = \left( \begin{array}{ccccc }
A_1 & A_2 & A_3 & \hdots & A_n\\
A_n & A_1 & A_2 & \hdots &  A_{n-1}\\
A_{n-1} & A_n & A_1 & \hdots & A_{n-2}\\
\vdots & \vdots & \vdots & \ddots &\vdots \\
A_2 & A_3 & A_4 & \hdots & A_1
\end{array}
\right)\] \noindent where each $A_i$ is a $k \times k$ matrix over $R$.
\end{defi}

Let $G$ be a finite group or order $n$, then the group ring $RG$ consists of $\sum_{i=1}^n \alpha_i g_i$, $\alpha_i \in R$, $g_i \in G$. Addition in the group ring is done by coordinate addition, namely
\begin{equation} \sum_{i=1}^n \alpha_i g_i + \sum_{i=1}^n \beta_i g_i =
\sum_{i=1}^n (\alpha_i + \beta_i ) g_i.\end{equation}  The product of two elements in  a group ring  is given by
\begin{equation} \left(\sum_{i=1}^n \alpha_i g_i \right) \left( \sum_{j=1}^n \beta_j g_j \right)  = \sum_{i,j} \alpha_i \beta_j g_i g_j. \end{equation}  It follows  that the coefficient of $g_k$ in the product is $ \sum_{g_i g_j = g_k } \alpha_i \beta_j .$

The following construction of a matrix was first given by Hurley in \cite{Hurley1}. Let $R$ be a finite commutative Frobenius ring of characteristic $2$ and let $G = \{ g_1,g_2,\dots,g_n \}$ be a group of order $n$.  Let $v = \ds{\sum_{i=1}^n}\alpha_{g_i}g_i  \in RG.$  Define the matrix $\sigma(v) \in M_n(R)$ to be $\sigma(v)=(\alpha_{g_i^{-1} g_j})$ where $i,j \in  \{1, \ldots, n\}$. We note that the elements $g_1^{-1}, g_2^{-1}, \dots, g_n^{-1}$ are  the elements of the group $G$ in a given order.

Recall the canonical involution $%
*:RG\rightarrow RG$ on a group ring $RG$ is given by $v^* =
\sum_{g}a_gg^{-1} $, for $v = \sum_{g}a_g g \in RG$. An important connection
between $v^*$ and $v$ appears when we take their images under the $\sigma$
map:
\begin{equation}  \label{sigmavstar}
\sigma(v^*) = \sigma(v)^{T}.
\end{equation}

We will now
describe $\sigma(v)$ for the following group rings $RG$ where $G \in \{C_{n},C_{m} \times C_n,C_{m,n}\}$.
\begin{itemize}
\item Let $G=\langle x \,|\,x^{n}=1 \rangle \cong C_{n}$. If $v=\ds{ \sum_{i=0}^{n-1}} \alpha_{i+1}x^i\in RC_{n}$, then $\sigma(v)=\mbox{circ}(a_0,a_1,\dots,a_{n-1})$.
\item Let $G =\langle x,y \,|\,x^{n}=y^m=1,\,xy=yx \rangle \cong C_{m} \times C_n$ If $\ds{ v =\sum_{i=0}^{m-1} \sum_{j=0}^{n-1} a_{1+i+mj}x^{i}y^j} \in R(C_{m} \times C_n)$, then
\[ \sigma(v)=CIRC(A_1,\ldots,A_n)\]
where $A_{j+1}=circ(a_{1+mj},a_{2+mj},\ldots ,a_{m+mj})$, $a_{i}\in R$ and $m,n\geq 2$.
\item Let $G=C_{m,n}=\langle x \,|\,
x^{mn}=1 \rangle$. If $\ds{ v =\sum_{i=0}^{m-1} \sum_{j=0}^{n-1} a_{1+i+mj}x^{ni+j}} \in RC_{m,n}$, then

\[
\sigma (v )=%
\left(\begin{smallmatrix}
A_{1} & A_{2} & A_{3} & A_{4} & \cdots & A_{n-1} & A_{n} \\
A_{n}^{\prime } & A_{1} & A_{2} & A_{3} & \cdots & A_{n-2} & A_{n-1} \\
A_{n-1}^{\prime } & A_{n}^{\prime } & A_{1} & A_{2} & \cdots & A_{n-3} &
A_{n-2} \\
A_{n-2}^{\prime } & A_{n-1}^{\prime } & A_{n}^{\prime } & A_{1} & \cdots &
A_{n-4} & A_{n-4} \\
\vdots & \vdots & \vdots & \vdots & \ddots & \vdots & \vdots \\
A_{3}^{\prime } & A_{4}^{\prime } & A_{5}^{\prime } & A_{6}^{\prime } &
\cdots & A_{1} & A_{2} \\
A_{2}^{\prime } & A_{3}^{\prime } & A_{4}^{\prime } & A_{5}^{\prime } &
\cdots & A_{n}^{\prime } & A_{1}%
\end{smallmatrix} \right)
\]
where $A_{j+1}=circ(a_{1+mj},a_{2+mj},\ldots ,a_{m+mj})$, $A_{j+1}^{\prime }=circ(a_{m+mj},a_{1+mj},\dots ,a_{(m-1)+mj})$, $a_{i}\in R$ and $m,n\geq 2$. Note that $C_{m,n}$ is the same as the cyclic group $C_{mn}$, however we give a different labeling to the elements, which makes $\sigma(v)$ to be different than the matrix that we obtain from the standard labeling of cyclic groups.
\end{itemize}

\section{The Construction}

Let $v_1,v_2 \in RG $ where $R$ is a finite commutative Frobenius ring of characteristic $2$, $G$ is a finite group of order $p$ (where $p$ is odd) and $\gamma_i \in R$. Define the following matrix:

\begin{center}
\begin{tikzpicture}[scale=0.8]
\node at (0,0) {$\gamma_1$};\node at (0.75,0) {$\gamma_2$};\node at (1.5,0) {$\cdots$};\node at (2.25,0) {$\gamma_2$};
\node at (0,-0.75) {$\gamma_2$};\node at (0,-1.40) {$\vdots$};\node at (0,-2.25) {$\gamma_2$};
\draw (0.375,0.2) --(0.375,-2.4);\draw (-0.2,-0.375) --(2.4,-0.375);\node at (1.3875,-1.3875) {$\sigma(v_1)$};
\coordinate (G) at (-0.3,0.2);
\coordinate (R) at (-0.3,-2.4);
\draw [black]   (G) to[out=260,in=100] (R);
\coordinate (G1) at (2.5,0.2);
\coordinate (R1) at (2.5,-2.4);
\draw [black]   (G1) to[out=280,in=80] (R1);

\node at (3.75,0) {$\gamma_3$};\node at (4.5,0) {$\gamma_4$};\node at (5.25,0) {$\cdots$};\node at (6.0,0) {$\gamma_4$};
\node at (3.75,-0.75) {$\gamma_4$};\node at (3.75,-1.40) {$\vdots$};\node at (3.75,-2.25) {$\gamma_4$};
\coordinate (G) at (3.5,0.2);
\coordinate (R) at (3.5,-2.4);
\draw [black]   (G) to[out=260,in=100] (R);
\coordinate (G1) at (6.25,0.2);
\coordinate (R1) at (6.25,-2.4);
\draw [black]   (G1) to[out=280,in=80] (R1);
\draw (4.125,0.2) --(4.125,-2.4);\draw (3.55,-0.375) --(6.15,-0.375);\node at (5.25,-1.3875) {$\sigma(v_2)$};

\node at (0,-3) {$\gamma_3$};\node at (0.75,-3) {$\gamma_4$};\node at (1.5,-3) {$\cdots$};\node at (2.25,-3) {$\gamma_4$};
\node at (0,-3.75) {$\gamma_4$};\node at (0,-4.40) {$\vdots$};\node at (0,-5.25) {$\gamma_4$};
\draw (0.375,-2.8) --(0.375,-5.4);\draw (-0.2,-3.375) --(2.4,-3.375);\node at (1.3875,-4.3875) {$\sigma(v_2)$};
\coordinate (G) at (-0.3,-2.8);
\coordinate (R) at (-0.3,-5.4);
\draw [black]   (G) to[out=260,in=100] (R);
\coordinate (G1) at (2.5,-2.8);
\coordinate (R1) at (2.5,-5.4);
\draw [black]   (G1) to[out=280,in=80] (R1);
\node [scale=0.75] at (2.7,-2.9) {$T$};

\node at (3.75,-3) {$\gamma_1$};\node at (4.5,-3) {$\gamma_2$};\node at (5.25,-3) {$\cdots$};\node at (6.0,-3) {$\gamma_2$};
\node at (3.75,-3.75) {$\gamma_2$};\node at (3.75,-4.40) {$\vdots$};\node at (3.75,-5.25) {$\gamma_2$};
\coordinate (G) at (3.5,-2.8);
\coordinate (R) at (3.5,-5.4);
\draw [black]   (G) to[out=260,in=100] (R);
\coordinate (G1) at (6.25,-2.8);
\coordinate (R1) at (6.25,-5.4);
\draw [black]   (G1) to[out=280,in=80] (R1);
\draw (4.125,-2.8) --(4.125,-5.4);\draw (3.55,-3.375) --(6.15,-3.375);\node at (5.25,-4.3875) {$\sigma(v_1)$};
\node [scale=0.75] at (6.45,-2.9) {$T$};
\draw  [line width=0.35mm] (6.7,0.2)--(6.8,0.2) --(6.8,-5.4)--(6.7,-5.4);
\draw  [line width=0.35mm] (-0.8,0.2) --(-0.8,-5.4);
\draw  [line width=0.35mm] (3.0,0.2) --(3.0,-5.4);
\draw  [line width=0.35mm] (-4.7,0.2)--(-4.8,0.2) --(-4.8,-5.4)--(-4.7,-5.4);
\draw  [line width=0.35mm] (-0.8,-2.6) --(6.8,-2.6);
\node [scale=1] at (-2.8,-2.6) {$I_{2p+2}$};
\node [scale=1] at (-5.6,-2.6) {$M_{\sigma}=$};
\draw  [line width=0.35mm] (-4.7,-5.8)--(-4.8,-5.8) --(-4.8,-10.2)--(-4.7,-10.2);
\node [scale=1] at (-5.2,-8) {$=$};
\node [scale=1] at (-2.8,-8) {$I_{2p+2}$};
\draw  [line width=0.35mm] (-0.8,-5.8) --(-0.8,-10.2);
\draw  [line width=0.35mm] (1.9,-5.8) --(1.9,-10.2);
\draw  [line width=0.35mm] (4.5,-5.8)--(4.6,-5.8) --(4.6,-10.2) --(4.5,-10.2);
\draw  [line width=0.35mm] (-0.8,-8) --(4.6,-8);
\node at (-0.4,-6) {$\gamma_1$};\node at (0.3,-6) {$\gamma_2$};\node at (0.95,-6) {$\cdots$};\node at (1.5,-6) {$\gamma_2$};
\node at (-0.4,-6.5) {$\gamma_2$};\node at (-0.4,-7.0) {$\vdots$};\node at (-0.4,-7.7) {$\gamma_2$};

\node at (2.3,-6) {$\gamma_3$};\node at (3,-6) {$\gamma_4$};\node at (3.65,-6) {$\cdots$};\node at (4.2,-6) {$\gamma_4$};
\node at (2.3,-6.5) {$\gamma_4$};\node at (2.3,-7.0) {$\vdots$};\node at (2.3,-7.7) {$\gamma_4$};

\node at (-0.4,-8.3) {$\gamma_3$};\node at (0.3,-8.3) {$\gamma_4$};\node at (0.95,-8.3) {$\cdots$};\node at (1.5,-8.3) {$\gamma_4$};
\node at (-0.4,-8.8) {$\gamma_4$};\node at (-0.4,-9.3) {$\vdots$};\node at (-0.4,-9.95) {$\gamma_4$};

\node at (2.3,-8.3) {$\gamma_1$};\node at (3,-8.3) {$\gamma_2$};\node at (3.65,-8.3) {$\cdots$};\node at (4.2,-8.3) {$\gamma_2$};
\node at (2.3,-8.8) {$\gamma_2$};\node at (2.3,-9.3) {$\vdots$};\node at (2.3,-9.95) {$\gamma_2$};
\draw  (-0.05,-5.8) --(-0.05,-10.2);
\draw  (2.65,-5.8) --(2.65,-10.2);
\draw  (-0.8,-8.55) --(4.6,-8.55);
\draw  (-0.8,-6.25) --(4.6,-6.25);
\node at (0.87,-7.125) {$\sigma(v_1)$};
\node at (0.97,-9.325) {$\sigma(v_2)^T$};
\node at (3.57,-7.125) {$\sigma(v_2)$};
\node at (3.67,-9.325) {$\sigma(v_1)^T$};
\end{tikzpicture}
\end{center}

\noindent Let $C_{\sigma}$ be a code that is generated by the matrix $M(\sigma)$. Then, the code $C_{\sigma}$ has length $4p+4$. We aim to establish some restrictions when
this construction yields self-dual codes.

\begin{thm}
Let $R$ be a finite commutative Frobenius ring of characteristic $2$ and let $G=\{g_1,g_2,\ldots,g_p\}$ be a finite group of order $p$ (where $p$ is odd). If
\begin{enumerate}
\item $v_1v_2=v_2v_1$
\item $\sum_{i=1}^4\gamma_i^2=1$,
\item $v_1v_1^*+v_2v_2^*+(\gamma_2+\gamma_4)^2\widehat{g}+1=0$,
\item $v_1^*v_1+v_2^*v_2+(\gamma_2+\gamma_4)^2\widehat{g}+1=0$ and
\item $\gamma_1=\delta_1$ and $\gamma_3=\delta_2$
\end{enumerate}
then $C_{\sigma}$ is a self-dual code of length $4p+4$ where $\widehat{g}=\sum_{i=1}^pg_i$, $v_1=\sum_{g \in G}\alpha_g g$, $v_2=\sum_{g \in G}\beta_g g$, $\delta_1=\sum_{g \in G}\alpha_g$ and  $\delta_2=\sum_{g \in G}\beta_g $.
\end{thm}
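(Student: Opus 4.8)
\pf The plan is to reduce the self-duality of $C_\sigma$ to the single matrix equation $AA^{T}=I_{2p+2}$, where $M_\sigma=[\,I_{2p+2}\mid A\,]$ with $A$ the $(2p+2)\times(2p+2)$ matrix displayed above, and then to verify that equation block by block, reading its $p\times p$ pieces back as identities in $RG$ through the map $\sigma$. For the reduction: since $M_\sigma$ is in standard form, $|C_\sigma|=|R|^{2p+2}$, and because $R$ is Frobenius $|C_\sigma|\,|C_\sigma^{\perp}|=|R|^{4p+4}$, whence $|C_\sigma^{\perp}|=|C_\sigma|$; therefore $C_\sigma$ is self-dual exactly when it is self-orthogonal. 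As $R$ has characteristic $2$, self-orthogonality means $M_\sigma M_\sigma^{T}=0$, i.e.\ $I_{2p+2}+AA^{T}=0$, i.e.\ $AA^{T}=I_{2p+2}$; so this identity is all that remains.

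To organize the computation I would write $\mathbf{1}$ for the all-ones row vector of length $p$, put $J=\mathbf{1}^{T}\mathbf{1}$ (the $p\times p$ all-ones matrix) and $V_i=\sigma(v_i)$, so that $V_i^{T}=\sigma(v_i^{*})$ by \eqref{sigmavstar}. Partitioning $A$ into four $(p+1)\times(p+1)$ blocks $A=\left(\begin{smallmatrix}P&Q\\R&S\end{smallmatrix}\right)$, the diagram gives $P=\left(\begin{smallmatrix}\gamma_1&\gamma_2\mathbf{1}\\\gamma_2\mathbf{1}^{T}&V_1\end{smallmatrix}\right)$, $Q=\left(\begin{smallmatrix}\gamma_3&\gamma_4\mathbf{1}\\\gamma_4\mathbf{1}^{T}&V_2\end{smallmatrix}\right)$, $R=\left(\begin{smallmatrix}\gamma_3&\gamma_4\mathbf{1}\\\gamma_4\mathbf{1}^{T}&V_2^{T}\end{smallmatrix}\right)$, $S=\left(\begin{smallmatrix}\gamma_1&\gamma_2\mathbf{1}\\\gamma_2\mathbf{1}^{T}&V_1^{T}\end{smallmatrix}\right)$. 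Next I would record the elementary facts that make the cross-terms collapse: $\mathbf{1}\mathbf{1}^{T}=p=1$ in $R$ (this is the only place oddness of $p$ is used); $\sigma$ is an injective ring homomorphism with $\sigma(1)=I_p$ and $\sigma(\widehat{g})=J$; and, since each row and each column of $\sigma(v)$ sums to the coefficient sum of $v$, one has $\mathbf{1}V_i=\delta_i\mathbf{1}$, $V_i\mathbf{1}^{T}=\delta_i\mathbf{1}^{T}$, and the same with $V_i$ replaced by $V_i^{T}$.

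Then I would expand $AA^{T}=\left(\begin{smallmatrix}PP^{T}+QQ^{T}&PR^{T}+QS^{T}\\RP^{T}+SQ^{T}&RR^{T}+SS^{T}\end{smallmatrix}\right)$; using the facts above, each $(p+1)\times(p+1)$ entry appears in the shape $\left(\begin{smallmatrix}c&z\,\mathbf{1}\\z\,\mathbf{1}^{T}&N\end{smallmatrix}\right)$ with $c,z\in R$, and I compare it with $I_{p+1}$. For the two diagonal entries, $c=\sum_{i=1}^{4}\gamma_i^{2}=1$ by $(2)$; $z=\gamma_1\gamma_2+\gamma_2\delta_1+\gamma_3\gamma_4+\gamma_4\delta_2$ is $0$ in characteristic $2$ once $(5)$ replaces $\delta_1,\delta_2$ by $\gamma_1,\gamma_3$; and $N$ equals $(\gamma_2+\gamma_4)^{2}J+V_1V_1^{T}+V_2V_2^{T}=\sigma\!\left((\gamma_2+\gamma_4)^{2}\widehat{g}+v_1v_1^{*}+v_2v_2^{*}\right)$ for the $(1,1)$-block and $(\gamma_2+\gamma_4)^{2}J+V_1^{T}V_1+V_2^{T}V_2=\sigma\!\left((\gamma_2+\gamma_4)^{2}\widehat{g}+v_1^{*}v_1+v_2^{*}v_2\right)$ for the $(2,2)$-block, so $(3)$, $(4)$ and injectivity of $\sigma$ force both to equal $\sigma(1)=I_p$. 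For the off-diagonal entry $PR^{T}+QS^{T}$, the scalar $\gamma_1\gamma_3+\gamma_2\gamma_4$ occurs twice and cancels, the coefficient of $\mathbf{1}$ again vanishes by $(5)$, and $N=V_1V_2+V_2V_1=\sigma(v_1v_2+v_2v_1)=0$ by $(1)$ and injectivity of $\sigma$; the remaining block $RP^{T}+SQ^{T}$ is the transpose of this one and hence also $0$. This yields $AA^{T}=I_{2p+2}$, so $C_\sigma$ is a self-dual code of length $4p+4$.

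I expect the only genuinely delicate point to be this last block expansion. It is purely mechanical but unforgiving about transposes — e.g.\ $V_1\mathbf{1}^{T}=\delta_1\mathbf{1}^{T}$ is meaningful while $\mathbf{1}^{T}V_1$ is not — and, above all, about whether a given diagonal block produces $V_iV_i^{T}$ or $V_i^{T}V_i$, since these are the $\sigma$-images of the two \emph{distinct} group-ring products $v_iv_i^{*}$ and $v_i^{*}v_i$; that distinction is exactly why the statement carries both $(3)$ and $(4)$ rather than just one. Everything else is bookkeeping, held together by the fact that $\sigma$ is an injective ring homomorphism sending $\widehat{g}$ to $J$. $\square$
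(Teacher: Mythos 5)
Your proposal is correct and follows essentially the same route as the paper: reduce self-duality to $M_\sigma M_\sigma^T=0$ via the free-rank/Frobenius counting argument, then verify the bordered block products using $\sigma(v^*)=\sigma(v)^T$, $\sigma(\widehat{g})=J$, the row/column sums $\delta_i$, and the fact that $p$ is odd, with conditions (1)--(5) killing each block. The only cosmetic difference is that you work directly with the four $(p+1)\times(p+1)$ blocks $P,Q,R,S$ (noting $R^T=Q$, $S^T=P$) rather than the paper's further split into the sub-blocks $B_1,\dots,B_6$, and your appeal to injectivity of $\sigma$ is unnecessary (the homomorphism property alone suffices for this direction).
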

\begin{proof}Clearly, $C_{\sigma}$ has free rank $2p+2$ as the left hand side of the generator matrix is the $4p+4$ by $4p+4$ identity matrix. Let us consider $M(\sigma)M(\sigma)^T$. Let
\resizebox{0.3\hsize}{!}{$M(\sigma)=\left(\ \ I_{2n}\ \
\begin{array}{|cc}
A & B \\
B^{T} & A^{T}%
\end{array}%
\right)$}, then
\newline $M(\sigma)M(\sigma)^T=\left(\begin{smallmatrix}AA^T+BB^T+I & AB+BA\\ B^TA^T+A^TB^T& B^TB+A^TA+I \end{smallmatrix} \right)$ where
\resizebox{0.2\hsize}{!}{$A=\left(\begin{array}{c|ccc}\gamma_1 & \gamma_2 & \cdots & \gamma_2\\ \hline \gamma_2 & & & \\ \vdots & &\sigma(v_1) & \\
\gamma_2 &  & & \end{array} \right)$} and \resizebox{0.2\hsize}{!}{$B=\left(\begin{array}{c|ccc}\gamma_3 & \gamma_4 & \cdots & \gamma_4 \\ \hline \gamma_4 & & & \\ \vdots & &\sigma(v_2) & \\
\gamma_4 &  & & \end{array} \right)$}.

\noindent Let $A=\left(\begin{smallmatrix}B_1&B_2\\B_2^T&B_3 \end{smallmatrix} \right)$ and $B=\left(\begin{smallmatrix}B_4&B_5\\B_5^T&B_6 \end{smallmatrix} \right)$ where $B_1=\gamma_1$,
$B_2=\left(\begin{smallmatrix}\gamma_2& \cdots &\gamma_2 \end{smallmatrix} \right)$, $B_3=\sigma(v_1)$, $B_4=\gamma_3$,
$B_5=\left(\begin{smallmatrix}\gamma_4& \cdots &\gamma_4 \end{smallmatrix} \right)$ and $B_6=\sigma(v_2)$. Now,
\[
AB+BA=\begin{pmatrix}
B_1B_4+B_2B_5^T+B_4B_1+B_5B_2^T & B_1B_5+B_2B_6+B_4B_2+B_5B_3\\
B_2^TB_4+B_3B_5^T+B_5^TB_1+B_6B_2^T& B_2^TB_5+B_3B_6+B_5^TB_2+B_6B_3
\end{pmatrix},
\]

\[
\begin{split}
B_1B_4+B_2B_5^T+B_4B_1+B_5B_2^T&=\gamma_1\gamma_3+\left(\begin{smallmatrix}\gamma_2& \cdots &\gamma_2 \end{smallmatrix} \right) \left(\begin{smallmatrix}\gamma_4\\ \vdots \\ \gamma_4 \end{smallmatrix} \right)+\gamma_3\gamma_1+\left(\begin{smallmatrix}\gamma_4& \cdots &\gamma_4 \end{smallmatrix} \right)\left(\begin{smallmatrix}\gamma_2\\ \vdots \\ \gamma_2 \end{smallmatrix} \right)\\
&= \gamma_1\gamma_3+ \gamma_2 \gamma_4+\gamma_3\gamma_1+  \gamma_4 \gamma_2=0,
\end{split}
\]

\[
\begin{split}
B_1B_5+B_2B_6+B_4B_2+B_5B_3
&=\gamma_1\left(\begin{smallmatrix}\gamma_4& \cdots &\gamma_4 \end{smallmatrix} \right)+
\left(\begin{smallmatrix}\gamma_2& \cdots &\gamma_2 \end{smallmatrix} \right)\sigma(v_2)+
\gamma_3\left(\begin{smallmatrix}\gamma_2& \cdots &\gamma_2 \end{smallmatrix} \right)+
\left(\begin{smallmatrix}\gamma_4& \cdots &\gamma_4 \end{smallmatrix} \right)\sigma(v_1)\\
&=\gamma_1\gamma_4\left(\begin{smallmatrix}1& \cdots &1 \end{smallmatrix} \right)+
\gamma_2\delta_2\left(\begin{smallmatrix}1& \cdots &1 \end{smallmatrix} \right)+
\gamma_3\gamma_2\left(\begin{smallmatrix}1& \cdots &1 \end{smallmatrix} \right)+
\gamma_4\delta_1\left(\begin{smallmatrix}1& \cdots &1 \end{smallmatrix} \right),\\
&=(\gamma_1\gamma_4+
\gamma_2\delta_2+
\gamma_3\gamma_2+
\gamma_4\delta_1)\left(\begin{smallmatrix}1& \cdots &1 \end{smallmatrix} \right)=0,
\end{split}
\]

\[
\begin{split}
B_2^TB_4+B_3B_5+B_5^TB_1+B_6B_2^T
&=\left(\begin{smallmatrix}\gamma_2\\ \vdots \\ \gamma_2 \end{smallmatrix} \right)\gamma_3+
\sigma(v_1)\left(\begin{smallmatrix}\gamma_4\\ \vdots \\ \gamma_4 \end{smallmatrix} \right)+
\left(\begin{smallmatrix}\gamma_4\\ \vdots \\ \gamma_4 \end{smallmatrix} \right)\gamma_1+
\sigma(v_2)\left(\begin{smallmatrix}\gamma_2\\ \vdots \\ \gamma_2 \end{smallmatrix} \right)\\
&=\gamma_2\gamma_3\left(\begin{smallmatrix}1\\ \vdots \\ 1 \end{smallmatrix} \right)+
\delta_1\gamma_4\left(\begin{smallmatrix}1\\ \vdots \\ 1\end{smallmatrix} \right)+
\gamma_4\gamma_1\left(\begin{smallmatrix}1\\ \vdots \\ 1 \end{smallmatrix} \right)+
\delta_2\gamma_2\left(\begin{smallmatrix}1\\ \vdots \\ 1\end{smallmatrix} \right)\\
&=(\gamma_2\gamma_3+\delta_1\gamma_4+\gamma_4\gamma_1+\delta_2\gamma_2)\left(\begin{smallmatrix}1\\ \vdots \\ 1 \end{smallmatrix} \right)=0,
\end{split}
\]

\[
\begin{split}
B_2^TB_5+B_3B_6+B_5^TB_2+B_6B_3 &=\left(\begin{smallmatrix}\gamma_2\\ \vdots \\ \gamma_2 \end{smallmatrix} \right)\left(\begin{smallmatrix}\gamma_4& \cdots &\gamma_4 \end{smallmatrix} \right)+\sigma(v_1)\sigma(v_2)+\left(\begin{smallmatrix}\gamma_4\\ \vdots \\ \gamma_4 \end{smallmatrix} \right)\left(\begin{smallmatrix}\gamma_2& \cdots &\gamma_2 \end{smallmatrix} \right)+\sigma(v_2)\sigma(v_1)\\
&=\gamma_2 \gamma_4 \begin{pmatrix}1&\cdots&1\\ \vdots&\ddots&\vdots\\ 1&\cdots&1  \end{pmatrix}+\sigma(v_1v_2)+\gamma_4 \gamma_2 \begin{pmatrix}1&\cdots&1\\ \vdots&\ddots&\vdots\\ 1&\cdots&1  \end{pmatrix}+\sigma(v_2v_1)\\
&=\sigma(v_1v_2)+\sigma(v_2v_1)=0.
\end{split}
\]

Also,
\[ AA^T+BB^T+I=\begin{pmatrix}B_1^2+B_2B_2^T+B_4^2+B_5B_5^T+1& B_1B_2+B_2B_3^T+B_4B_5+B_5B_6^T\\
B_2^TB_1+B_3B_2^T+B_5^TB_4+B_6B_5^T & B_2^TB_2+B_3B_3^T+B_5^TB_5+B_6B_6^T +I \end{pmatrix},\]
\[
\begin{split}
B_1^2+B_2B_2^T+B_4^2+B_5B_5^T+1&=\gamma_1^2+\left(\begin{smallmatrix}\gamma_2& \cdots &\gamma_2 \end{smallmatrix} \right) \left(\begin{smallmatrix}\gamma_2\\ \vdots \\ \gamma_2 \end{smallmatrix} \right)+\gamma_3^2+\left(\begin{smallmatrix}\gamma_4& \cdots &\gamma_4 \end{smallmatrix} \right) \left(\begin{smallmatrix}\gamma_4\\ \vdots \\ \gamma_4 \end{smallmatrix} \right)=0\\
&=
(\gamma_1+\gamma_3)^2+p(\gamma_2+\gamma_4)^2+1\\
&=\sum_{i=1}^4\gamma_i^2+1,
\end{split}\]

\[
\begin{split}
B_1B_2+B_2B_3^T+B_4B_5+B_5B_6^T&=\gamma_1\left(\begin{smallmatrix}\gamma_2& \cdots &\gamma_2 \end{smallmatrix} \right)+ \left(\begin{smallmatrix}\gamma_2& \cdots &\gamma_2 \end{smallmatrix} \right) \sigma(v_1)^T+\gamma_3  \left(\begin{smallmatrix}\gamma_4& \cdots &\gamma_4 \end{smallmatrix} \right)+\left(\begin{smallmatrix}\gamma_4& \cdots &\gamma_4 \end{smallmatrix} \right)\sigma(v_2)^T\\
&=\left(\begin{smallmatrix}\gamma_1 \gamma_2& \cdots & \gamma_1\gamma_2 \end{smallmatrix} \right)+\left(\begin{smallmatrix}\gamma_2 \delta_1& \cdots &\gamma_2 \delta_1 \end{smallmatrix} \right)+  \left(\begin{smallmatrix} \gamma_3 \gamma_4& \cdots &\gamma_3 \gamma_4 \end{smallmatrix} \right)+\left(\begin{smallmatrix}\gamma_4 \delta_2& \cdots & \delta_2\gamma_4 \end{smallmatrix} \right)\\
&=(\gamma_1 \gamma_2+\gamma_2 \delta_1+\gamma_3 \gamma_4+\gamma_4 \delta_2)
\left(\begin{smallmatrix} 1 & \cdots & 1\end{smallmatrix} \right)=0,
\end{split}
\]

\noindent where $v_1=\sum_{g \in G}\alpha_g g$, $v_2=\sum_{g \in G}\beta_g g$, $\delta_1=\sum_{g \in G}\alpha_g$ and  $\delta_2=\sum_{g \in G}\beta_g $ and
\[
\begin{split}
B_2^TB_2+B_3B_3^T+B_5^TB_5+B_6B_6^T +I&=\left(\begin{smallmatrix}\gamma_2\\ \vdots \\ \gamma_2 \end{smallmatrix} \right)\left(\begin{smallmatrix}\gamma_2& \cdots &\gamma_2 \end{smallmatrix} \right)
+\sigma(v_1)\sigma(v_1)^T+\left(\begin{smallmatrix}\gamma_4\\ \vdots \\ \gamma_4 \end{smallmatrix} \right)\left(\begin{smallmatrix}\gamma_4& \cdots &\gamma_4 \end{smallmatrix} \right)
+\sigma(v_2)\sigma(v_2)^T+I\\
&=\sigma(v_1v_1^*)+\sigma(v_2v_2^*)+(\gamma_2+\gamma_4)^2\begin{pmatrix}1&\cdots&1\\ \vdots&\ddots&\vdots\\ 1&\cdots&1  \end{pmatrix}+I.
\end{split}
\]
\noindent Additionally,
\[ B^TB+A^TA+I=\begin{pmatrix}B_1^2+B_2B_2^T+B_4^2+B_5B_5^T+1& B_1B_2+B_2B_3+B_4B_5+B_5B_6\\
B_2^TB_1+B_3^TB_2^T+B_5^TB_4+B_6^TB_5^T & B_2^TB_2+B_3^TB_3+B_5^TB_5+B_6^TB_6 +I \end{pmatrix}\]

\[
\begin{split}
B_1B_2+B_2B_3+B_4B_5+B_5B_6&=
\gamma_1\left(\begin{smallmatrix}\gamma_2& \cdots &\gamma_2 \end{smallmatrix} \right)+ \left(\begin{smallmatrix}\gamma_2& \cdots &\gamma_2 \end{smallmatrix} \right) \sigma(v_1)+\gamma_3  \left(\begin{smallmatrix}\gamma_4& \cdots &\gamma_4 \end{smallmatrix} \right)+\left(\begin{smallmatrix}\gamma_4& \cdots &\gamma_4 \end{smallmatrix} \right)\sigma(v_2)\\
&=\left(\begin{smallmatrix}\gamma_1 \gamma_2& \cdots & \gamma_1\gamma_2 \end{smallmatrix} \right)+\left(\begin{smallmatrix}\gamma_2 \delta_1& \cdots &\gamma_2 \delta_1 \end{smallmatrix} \right)+  \left(\begin{smallmatrix} \gamma_3 \gamma_4& \cdots &\gamma_3 \gamma_4 \end{smallmatrix} \right)+\left(\begin{smallmatrix}\gamma_4 \delta_2& \cdots & \delta_2\gamma_4 \end{smallmatrix} \right)\\
&=(\gamma_1 \gamma_2+\gamma_2 \delta_1+\gamma_3 \gamma_4+\gamma_4 \delta_2)
\left(\begin{smallmatrix} 1 & \cdots & 1\end{smallmatrix} \right)=0,
\end{split}
\]

\noindent and
\[ B_2^TB_2+B_3^TB_3+B_5^TB_5+B_6^TB_6 +I=\sigma(v_1^*v_1)+\sigma(v_2^*v_2)+(\gamma_2+\gamma_4)^2\begin{pmatrix}1&\cdots&1\\ \vdots&\ddots&\vdots\\ 1&\cdots&1  \end{pmatrix}+I.\]
Finally,  $M(\sigma)M(\sigma)^T$ is a symmetric matrix and $C_{\sigma}$ is self-orthogonal when $\sigma(v_1v_2)=\sigma(v_2v_1)$, $\sum_{i=1}^4\gamma_i^2=1$,  $\gamma_1=\delta_1$, $\gamma_3=\delta_2$,
$v_1v_1^*+v_2v_2^*+(\gamma_2+\gamma_4)^2\widehat{g}+1=0$ and $v_1^*v_1+v_2^*v_2+(\gamma_2+\gamma_4)^2\widehat{g}+1=0$.
\end{proof}

\begin{cor}
Let $R=R_k$, $G$ be a finite group of order $p$ (where $p$ is odd), $\gamma_2 +\gamma_4$ be a non-unit in $R_k$ and $C_{\sigma}$ be self-dual. Then either
\begin{itemize}
\item $v_1 \in RG$ is a unitary unit and $v_2$ is a non-unit or
\item $v_2 \in RG$ is a unitary unit and $v_1$ is a non-unit.
\end{itemize}
\end{cor}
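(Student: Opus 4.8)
The plan is to read off the hypotheses of the preceding theorem from self-duality of $C_\sigma$ (they are exactly the vanishing of $M(\sigma)M(\sigma)^T$ computed in its proof) and then to simplify them using that $\gamma_2+\gamma_4$ is a non-unit. By Lemma~\ref{lemma:rk} a non-unit of $R_k$ squares to $0$, so $(\gamma_2+\gamma_4)^2=0$. Hence, in characteristic $2$, condition~(2) together with $\gamma_1=\delta_1$, $\gamma_3=\delta_2$ reduces to $\delta_1^2+\delta_2^2=1$, while conditions~(3) and~(4) collapse to $v_1v_1^*+v_2v_2^*=1$ and $v_1^*v_1+v_2^*v_2=1$ in $R_kG$.

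From $\delta_1^2+\delta_2^2=1$ and Lemma~\ref{lemma:rk} (each $\delta_i^2$ is $0$ or $1$) exactly one of $\delta_1,\delta_2$ is a unit of $R_k$ and the other is a non-unit; after interchanging $v_1$ and $v_2$ if needed, assume $\delta_1$ is a unit and $\delta_2$ is a non-unit. The augmentation $\varepsilon\colon R_kG\to R_k$, $\sum_g a_g g\mapsto\sum_g a_g$, is a ring homomorphism with $\varepsilon(v_i)=\delta_i$, hence sends units to units; since $\varepsilon(v_2)=\delta_2$ is a non-unit, $v_2$ is a non-unit of $R_kG$. This gives one half of the dichotomy, so it remains to prove that $v_1$ is a unitary unit, i.e. $v_1v_1^*=1$ — equivalently, by the simplified condition~(3), $v_2v_2^*=0$.

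For the last point I would use the element $\widehat{g}=\sum_{g\in G}g$, which is a central idempotent because $|G|=p$ is odd (so $\widehat{g}^{2}=p\widehat{g}=\widehat{g}$) and is fixed by $*$. It yields a ring decomposition $R_kG=\widehat{g}R_kG\oplus(1+\widehat{g})R_kG$ with $\widehat{g}R_kG\cong R_k$ via $c\widehat{g}\leftrightarrow c$. On the first factor $v_i$ maps to $\delta_i$, so the relation $v_1v_1^*+v_2v_2^*=1$ only recovers $\delta_1^2+\delta_2^2=1$; on the second factor $A=(1+\widehat{g})R_kG$, carrying the anti-automorphism induced by $*$, the images $x,y$ of $v_1,v_2$ satisfy $xx^*+yy^*=1_A$, $x^*x+y^*y=1_A$ and $xy=yx$. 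Reducing $A$ modulo the nilpotent maximal ideal of $R_k$ gives the semisimple ring $(1+\widehat{g})\F_2G$, and the goal is to conclude there (then lift back through the nilpotent radical) that the $*$-symmetric element $yy^*$ is $0$ and therefore $xx^*=1_A$; combined with the $\widehat{g}$-component this gives $v_1v_1^*=1$, so $v_1^{-1}=v_1^*$. The symmetric case is identical with~(4) in place of~(3).

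\textbf{Main obstacle.} The last step is the delicate one: the numerical identity $\delta_1^2+\delta_2^2=1$ only pins down the $\widehat{g}$-component of each $v_iv_i^*$, and the relations on the complementary factor $A$ are symmetric under $v_1\leftrightarrow v_2$. One must therefore genuinely exclude the possibility that $v_1v_1^*$ and $v_2v_2^*$ split there as a nontrivial pair of orthogonal idempotents summing to $1_A$ rather than as $1_A$ and $0$, and also control the lift through the nilpotent radical. This is exactly where the hypotheses $R=R_k$, $G$ of odd order and $\gamma_2+\gamma_4$ a non-unit, together with the explicit structure of $\F_2G$ for the groups used in our constructions, must be brought fully to bear; making this step rigorous is the crux of the proof.
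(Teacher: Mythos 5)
Your reduction of the hypotheses (using $(\gamma_2+\gamma_4)^2=0$ to rewrite condition (3) as $v_1v_1^*+v_2v_2^*=1$ and condition (2) as $\delta_1^2+\delta_2^2=1$) and your augmentation argument showing that the $v_i$ whose augmentation $\delta_i$ is a non-unit is itself a non-unit are both sound. But the step you yourself flag as the ``main obstacle'' is a genuine gap, and it cannot be closed: the relation $v_1v_1^*+v_2v_2^*=1$ does not force $v_2v_2^*=0$, because the two summands can be a nontrivial pair of $*$-invariant orthogonal idempotents. Concretely, take $R=\F_2$, $G=C_3=\langle x\rangle$, $v_1=\widehat{g}=1+x+x^2$, $v_2=x+x^2$, $(\gamma_1,\gamma_2,\gamma_3,\gamma_4)=(1,0,0,0)$. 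Then $v_1v_1^*=\widehat{g}$, $v_2v_2^*=1+\widehat{g}$, so all five conditions of the theorem hold ($\delta_1=1=\gamma_1$, $\delta_2=0=\gamma_3$, $\sum\gamma_i^2=1$, $\gamma_2+\gamma_4$ a non-unit), hence $C_\sigma$ is self-dual; yet both $\sigma(v_1)$ and $\sigma(v_2)$ are singular, so neither $v_1$ nor $v_2$ is a unit, let alone a unitary unit. So the dichotomy in the strong form you are trying to prove (that one of $v_1,v_2$ \emph{must} be a unitary unit) is simply not a consequence of the hypotheses, and no analysis of the complementary factor $(1+\widehat{g})\F_2 G$ will produce it. (A smaller point: condition (5), $\gamma_1=\delta_1$, $\gamma_3=\delta_2$, is a hypothesis of the sufficiency theorem and is not itself forced by self-duality, so even your bookkeeping step $\delta_1^2+\delta_2^2=1$ needs that extra assumption.)

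By contrast, the paper reads the corollary much more weakly and proves only a conditional statement: it \emph{assumes} that $v_1$ is a unitary unit, so that $v_1v_1^*=1$ and, with $(\gamma_2+\gamma_4)^2=0$, condition (3) gives $v_2v_2^*=0$; then $\sigma(v_2)\sigma(v_2)^T=0$ has determinant $0$, whence $v_2$ is a non-unit by Corollary 3 of \cite{Hurley1}; the other case is symmetric. Your augmentation argument is a correct (and arguably cleaner) alternative route to a ``non-unit'' conclusion, but it proves a different implication (non-unit augmentation implies non-unit), not the paper's conditional claim and not the full dichotomy you set out to establish. In short: the half of your argument that is complete is fine, the half you left open is exactly where the statement, as you interpret it, fails, and the paper's own proof sidesteps the issue by only proving ``if one of the $v_i$ is a unitary unit, the other is a non-unit.''
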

\begin{proof} Let $v_1 \in RG$ be a unitary unit and $\gamma_2 +\gamma_4$ be a non-unit in $R_k$. Then, $v_1v_1^*=1$ and $(\gamma_2 +\gamma_4)^2=0$.
Clearly $v_2v_2^*=0$, $det(\sigma(v_2v_2^*))=0$ is a non-unit by Corollary $3$ in \cite{Hurley1}. Therefore $v_2$ is a non-unit in $RG$. Similarly, if $v_2 \in RG$ is unitary
and $\gamma_2+\gamma_4$ is a non-unit in $R_k$, then $v_1$ is a non-unit in $RG$.

\end{proof}
\begin{cor}
Let $R=R_k$, $G$ be a finite group of order $p$ (where $p$ is odd), $\gamma_2+\gamma_4$ be a non-unit in $R_k$ and $C_{\sigma}$ be self-dual. Then $v_1^*v_1+v_2^*v_2$ is a non-unit in $RG$.
\end{cor}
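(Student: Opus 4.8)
The plan is to obtain the conclusion from the preceding Corollary together with Hurley's determinant test for units. Since $\gamma_2+\gamma_4$ is a non-unit and $C_\sigma$ is self-dual, the preceding Corollary applies, and we may assume without loss of generality that $v_1$ is a unitary unit and $v_2$ is a non-unit in $R_kG$. Being unitary means $v_1^{*}=v_1^{-1}$, so $v_1^{*}v_1=1$ and the element under study collapses to $v_1^{*}v_1+v_2^{*}v_2=1+v_2^{*}v_2$. Thus everything reduces to understanding the single summand $v_2^{*}v_2$.

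First I would record that $v_2^{*}v_2$ is a non-unit. As $v_2$ is a non-unit, its image $\sigma(v_2)\in M_p(R_k)$ is singular, so $\det\sigma(v_2)$ is a non-unit of $R_k$ by Corollary~3 of \cite{Hurley1}. Combining the multiplicativity of $\sigma$ with the identity $\sigma(v_2^{*})=\sigma(v_2)^{T}$ yields $\det\sigma(v_2^{*}v_2)=\bigl(\det\sigma(v_2)\bigr)^{2}$, which is again a non-unit, so $v_2^{*}v_2$ is a non-unit of $R_kG$ by the same characterisation.

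To pass from the summand to the sum I would identify $v_2^{*}v_2$ precisely. Self-duality forces the vanishing of the lower-right block $B^{T}B+A^{T}A+I$ of $M(\sigma)M(\sigma)^{T}$ computed in the proof of the main Theorem, that is, the relation $v_1^{*}v_1+v_2^{*}v_2+(\gamma_2+\gamma_4)^{2}\widehat{g}+1=0$; together with $v_1^{*}v_1=1$ this pins down $v_2^{*}v_2=(\gamma_2+\gamma_4)^{2}\widehat{g}$, so that $v_1^{*}v_1+v_2^{*}v_2=1+(\gamma_2+\gamma_4)^{2}\widehat{g}$. The structural lever here is that $\widehat{g}$ is a nontrivial idempotent: since $p$ is odd and $\operatorname{char}R_k=2$, each group element is hit exactly $p$ times in $\widehat{g}^{\,2}$, whence $\widehat{g}^{\,2}=p\,\widehat{g}=\widehat{g}$ with $\widehat{g}\neq 0,1$.

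I expect the genuine difficulty to sit in the final certification that $1+(\gamma_2+\gamma_4)^{2}\widehat{g}$ is a non-unit; here $R_kG$ is not local (for $G$ of odd order in characteristic $2$ the group algebra $\mathbb{F}_2G$ is semisimple), so being a sum of a unit and a non-unit does not by itself settle invertibility. The natural device is to exhibit a nonzero annihilator, and $\widehat{g}$ is the obvious candidate because idempotency gives $\widehat{g}\bigl(1+(\gamma_2+\gamma_4)^{2}\widehat{g}\bigr)=\bigl(1+(\gamma_2+\gamma_4)^{2}\bigr)\widehat{g}$. Turning this into an effective annihilation, and reconciling it with the non-unit status of $v_2^{*}v_2$ established above, is the crux of the argument: it is exactly at this point that one must track how the non-unit hypothesis on $\gamma_2+\gamma_4$ propagates, through the square $(\gamma_2+\gamma_4)^{2}$ and the idempotent $\widehat{g}$, to the invertibility of the sum. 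I would treat this idempotent bookkeeping as the main obstacle and the heart of the Corollary.
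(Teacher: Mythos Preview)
The obstacle you flag at the end is not merely a bookkeeping difficulty: under the hypothesis as printed the conclusion is actually false. The statement contains a typo; the paper's own proof opens by using $(\gamma_2+\gamma_4)^2=1$, which by Lemma~\ref{lemma:rk} forces $\gamma_2+\gamma_4$ to be a \emph{unit}, not a non-unit. You faithfully followed the printed non-unit hypothesis, and that is exactly why your plan cannot close: if $\gamma_2+\gamma_4$ is a non-unit then $(\gamma_2+\gamma_4)^2=0$, the self-duality relation collapses to $v_1^{*}v_1+v_2^{*}v_2=1$, and no amount of idempotent bookkeeping will make $1$ a non-unit.

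With the corrected hypothesis the paper's route is quite different from yours. It does not invoke the preceding Corollary or split the sum into its two summands; it simply reads off from the self-duality relation that $\sigma(v_1^{*}v_1+v_2^{*}v_2)$ is the $p\times p$ matrix with $0$'s on the diagonal and $1$'s elsewhere, and computes its determinant by adding all rows to the first. The resulting first row has every entry equal to $p-1$, which is $0$ in characteristic~$2$ since $p$ is odd, so the determinant vanishes and Hurley's criterion yields the non-unit conclusion. Your annihilator idea would also succeed once the hypothesis is fixed---with $(\gamma_2+\gamma_4)^2=1$ the relation gives $v_1^{*}v_1+v_2^{*}v_2=1+\widehat{g}$ directly, and $\widehat{g}(1+\widehat{g})=\widehat{g}+\widehat{g}^{\,2}=0$ exhibits a nonzero annihilator---but your detour through the preceding Corollary to obtain $v_1^{*}v_1=1$ is then both unnecessary and unavailable, since that Corollary genuinely requires the non-unit hypothesis on $\gamma_2+\gamma_4$.
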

\begin{proof}
Let $\gamma_2+\gamma_4$, then $(\gamma_2+\gamma_4)^2=1$. Now
\[
\begin{split}
\sigma(v_1^*v_1)+\sigma(v_2^*v_2)+(\gamma_2+\gamma_4)^2\begin{pmatrix}1&\cdots&1\\ \vdots&\ddots&\vdots\\ 1&\cdots&1  \end{pmatrix}+I&=0\\
\sigma(v_1^*v_1)+\sigma(v_2^*v_2)+\begin{pmatrix}1&\cdots&1\\ \vdots&\ddots&\vdots\\ 1&\cdots&1  \end{pmatrix}+I&=0\\
\sigma(v_1^*v_1+v_2^*v_2)&=\begin{pmatrix} 0&1&1& \cdots &1\\1&0&1& \cdots &1 \\1 &1 & 0 & \cdots &1\\ \vdots & \vdots & \vdots & \ddots & 1\\ 1&1&1& \cdots &0  \end{pmatrix}.
\end{split}
\]
\noindent Now,
\[
det \begin{pmatrix} 0&1&1& \cdots &1\\1&0&1& \cdots &1 \\1 &1 & 0 & \cdots &1\\ \vdots & \vdots & \vdots & \ddots & 1\\ 1&1&1& \cdots &0  \end{pmatrix}
=det \begin{pmatrix} p-1&p-1&p-1& \cdots &p-1\\1&0&1& \cdots &1 \\1 &1 & 0 & \cdots &1\\ \vdots & \vdots & \vdots & \ddots & 1\\ 1&1&1& \cdots &0  \end{pmatrix}
=(p-1)det \begin{pmatrix} 1&1&1& \cdots &1\\1&0&1& \cdots &1 \\1 &1 & 0 & \cdots &1\\ \vdots & \vdots & \vdots & \ddots & 1\\ 1&1&1& \cdots &0  \end{pmatrix}=0
\]
\noindent since $p$ is odd. Therefore $\sigma(v_1^*v_1+v_2^*v_2)=0$ and $v_1^*v_1+v_2^*v_2$ is non-unit by Corollary $3$ in \cite{Hurley1}.
\end{proof}
\section{Extremal binary self-dual codes from the constructions}

In this section, we will present the results obtained  using the construction described in section 3, to construct self-dual codes for certain groups over different alphabets.

\subsection{Constructions coming from $C_3$}
We apply the constructions over the rings $\F_4$ and $\F_4+u\F_4$. The Gray images of the self-dual codes are binary self-dual codes of length 32 and 64 respectively. We only list the extremal ones.
\begin{table}[H]
\caption{Extremal binary self-dual codes of length 32 from self-dual codes over $\FF_4$ of length $16$ via $C_{3}$}
\begin{center}
\scalebox{0.8}{
\begin{tabular}{cccccc}
\hline
$(\gamma_1,\gamma_2)$& $v_1$ & $(\gamma_3,\gamma_4)$&$v_2$ & $|Aut(\mathcal{C})|$ & Type\\ \hline \hline
$(0,1)$& $(0,1,1)$ & $(\w,\w)$&$(0,1,\w+1)$ & $2^9 \cdot 3^3 \cdot 5$ & $[32,16,8]_{II}$\\ \hline
$(0,1)$& $(1,\w,\w+1)$ & $(\w,\w)$&$(\w,\w+1,\w+1)$ & $2^{15} \cdot 3^2 \cdot 5 \cdot 7$ & $[32,16,8]_{II}$\\ \hline
$(\w,\w)$& $(0,0,\w)$ & $(\w,\w+1)$&$(1,1,\w)$ & $2^{15} \cdot 3^2$ & $[32,16,8]_{II}$\\ \hline
\end{tabular}}%
\end{center}
\end{table}

\bigskip

We recall that the possible weight enumerators for a self-dual Type I $\left[ 64,32,12\right]$-code is given in \cite{conway,binary} as:
\begin{eqnarray*}
W_{64,1} &=&1+\left( 1312+16\beta \right) y^{12}+\left( 22016-64\beta
\right) y^{14}+\cdots ,14\leq \beta \leq 284, \\
W_{64,2} &=&1+\left( 1312+16\beta \right) y^{12}+\left( 23040-64\beta
\right) y^{14}+\cdots ,0\leq \beta \leq 277.
\end{eqnarray*}%

With the most updated information, the existence of codes is known for $\beta =$14, 18, 22, 25,\ 29, 32, 35,
36, 39, 44, 46, 53,\ 59, 60, 64 and 74 in $W_{64,1}$ and for $\beta =$0, 1,
2, 4, 5,\ 6, 8, 9, 10, 12, 13,\ 14, 16$,\ldots ,$\ 25,\ 28,\ 19,\ 30, 32,\
33,\ 34, 36, 37, 38, 40,\ 41,\ 42, 44, 45, 48, 50, 51,\ 52,\ 56, 58, 64, 72,
80,\ 88,\ 96, 104, 108,\ 112,\ 114,\ 118,\ 120 and 184 in $W_{64,2}$.

\bigskip
\begin{table}[H]
\caption{Extremal binary self-dual codes of length 64 from self-dual codes over $\FF_4+u\FF_4$ of length 16 via $C_{3}$.}\label{T:1}
\begin{center}
\scalebox{0.8}{
\begin{tabular}{ccccccc}
\hline
$\mathcal{C}_i$ & $(\gamma_1,\gamma_2)$& $v_1$ & $(\gamma_3,\gamma_4)$&$v_2$ & $|Aut(\mathcal{C}_i)|$ & $W_{64,2}$\\ \hline \hline
1 & $(1,8)$& $(2,A,9)$ & $(6,6)$ & $(0,9,F)$ & $2^2 \cdot 3$ & $\beta=13$\\ \hline
2 &$(0,A)$&$(2,9,B)$&$(6,5)$&$(8,B,5)$ &$2^3 \cdot 3$ & $\beta=13$\\ \hline
3 &$(0,A)$&$(A,2,9)$&$(4,7)$&$(9,6,1)$ &$2^4 \cdot 3$ & $\beta=16$\\ \hline
4 &$(0,A)$&$(A,1,6)$&$(6,5)$&$(4,D,F)$ &$2^2 \cdot 3$ & $\beta=19$\\ \hline
5 &$(1,8)$&$(B,4,E)$&$(4,4)$&$(0,2,6)$ &$2^2 \cdot 3$ & $\beta=22$\\ \hline
6 &$(9,2)$&$(2,A,1)$&$(6,6)$&$(8,3,D)$ &$2^2 \cdot 3$ & $\beta=25$\\ \hline
7 &$(1,8)$&$(A,A,1)$&$(4,4)$&$(8,B,7)$ &$2^3 \cdot 3$ & $\beta=25$\\ \hline
8 &$(1,8)$&$(A,6,D)$&$(4,4)$&$(E,5,F)$ &$2^2 \cdot 3$ & $\beta = 37$\\ \hline
9 &$(2,9)$&$(1,E,D)$&$(4,E)$&$(4,F,F)$ &$2^3 \cdot 3$ & $\beta =37$\\ \hline
10 &$(0,A)$&$(0,9,9)$&$(4,7)$&$(0,1,5)$ &$2^4 \cdot 3^2$ & $\beta=40$\\ \hline
11 &$(0,A)$&$(0,9,9)$&$(4,7)$&$(2,9,F)$ &$2^4 \cdot 3$ & $\beta=64$\\ \hline
\end{tabular}}%
\end{center}
\end{table}

\subsection{Constructions coming from $C_7$}

We apply the constructions coming from $C_7$ over the binary field and the ring $R_1 =\F_2+u\F_2$ as a result of which we obtain extremal binary self-dual codes of lengths 32 and 64 respectively.
\begin{table}[H]
\caption{Extremal binary self-dual codes of length $32$ from $C_{7}$.}
\begin{center}
\scalebox{0.9}{
\begin{tabular}{cccccc}
\hline
$(\gamma_1,\gamma_2)$& $v_1$ & $(\gamma_3,\gamma_4)$&$v_2$ & $|Aut(\mathcal{C})|$ & Type \\ \hline \hline
$(0,0)$& $(0,0,0,0,0,1,1)$ & $(0,1)$&$(0,1,1,0,0,1,1)$ & $2^{12} \cdot 3 \cdot 7$ & $[32,16,8]_{II}$ \\ \hline
$(0,0)$& $(0,0,1,0,1,1,1)$ & $(0,1)$&$(0,1,1,1,1,1,1)$ & $2^{15} \cdot 3^2 \cdot 5 \cdot 7$ & $[32,16,8]_{II}$ \\ \hline
$(1,0)$& $(0,0,0,0,1,1,1)$ & $(1,1)$&$(1,1,0,1,0,1,1)$ & $2^{12} \cdot 3 \cdot 7$ & $[32,16,8]_{I}$ \\ \hline
\end{tabular}}%
\end{center}
\end{table}

\begin{table}[H]
\caption{Extremal binary self-dual codes of length 64 from self-dual codes over $\FF_2+u\FF_2$ of length $32$ via $C_{7}$.}
\begin{center}
\scalebox{0.9}{
\begin{tabular}{cccccc}
\hline
 $(\gamma_1,\gamma_2)$& $v_1$ & $(\gamma_3,\gamma_4)$&$v_2$ & $|Aut(\mathcal{C})|$ & $W_{64,2}$\\ \hline \hline
$(u,u)$ & $(u,0,0,u,0,1,3)$ & $(1,1)$ & $(u,1,1,0,u,3,1)$ & $2^2 \cdot 7$ & $\beta=16$ \\ \hline
$(u,u)$ & $(u,u,0,0,0,1,3)$ & $(1,1)$ & $(u,1,1,u,0,3,1)$ & $2^2 \cdot 7$ & $\beta=30$ \\ \hline
$(u,u)$ & $(u,0,1,0,1,1,1)$ & $(u,1)$ & $(u,1,1,3,1,1,3)$ & $2^2 \cdot 7$ & $\beta=37$ \\ \hline
$(u,u)$ & $(u,u,1,u,1,1,1)$ & $(u,1)$ & $(u,1,1,3,3,1,1)$ & $2^3 \cdot 3 \cdot 7 $ & $\beta=37$ \\ \hline
$(u,u)$ & $(u,0,u,0,0,1,3)$ & $(1,1)$ & $(u,1,1,u,0,1,3)$ & $2^2 \cdot 7$ & $\beta =44$ \\ \hline
$(u,u)$ & $(u,u,0,0,u,1,1)$ & $(u,1)$ & $(u,1,3,u,u,1,3)$ & $2^3 \cdot 7$ & $\beta=44$ \\ \hline
$(u,u)$ & $(u,0,1,0,1,3,3)$ & $(1,1)$ & $(u,1,1,1,3,3,1)$ & $2^2 \cdot 7$ & $\beta=51$ \\ \hline
$(u,u)$ & $(u,u,u,u,u,1,1)$ & $(u,1)$ & $(u,1,3,u,u,,1)$ & $2^4 \cdot 3 \cdot 7$ & $\beta=72$ \\ \hline
\end{tabular}}%
\end{center}
\end{table}

\subsection{Constructions coming from groups of order $9$}
We apply the constructions $C_9, C_{3,3}$ and $C_{3}\times C_3$ over the binary field and the ring $R_1=\F_2+u\F_2$, as a result of which we obtain binary self-dual codes of lengths 40 and $80$. For the length 40, we get extremal self-dual codes and for length 80, we get the best Type I codes, i.e., self-dual codes that have parameters $[80,40,14]$.

\begin{table}[H]
\caption{Extremal binary self-dual codes of length $40$ from $C_9$, $C_{3,3}$ and $C_3\times C_3$}
\begin{center}
\scalebox{0.8}{
\begin{tabular}{ccccccc}
\hline
Const & $(\gamma_1,\gamma_2)$& $v_1$ & $(\gamma_3,\gamma_4)$&$v_2$ & $|Aut(C)|$ &Type \\ \hline \hline
$C_9$ &$(0,0)$& $(0,0,0,0,0,0,0,1,1)$ & $(0,1)$&$(0,0,1,1,1,0,1,1,1)$ & $2^{11} \cdot 3^2$ & $[40,20,8]_{I}$ \\ \hline
$C_9$ &$(0,0)$& $(0,0,0,0,1,0,1,1,1)$ & $(0,1)$&$(0,0,1,0,1,0,0,1,1)$ & $2^2 \cdot 3^2$ & $[40,20,8]_{I}$ \\ \hline
$C_9$ &$(0,0)$& $(0,0,0,1,1,1,1,1,1)$ & $(0,1)$&$(0,0,1,1,0,1,1,1,1)$ & $2^2 \cdot 3^2$ & $[40,20,8]_{I}$ \\ \hline
$C_9$ &$(1,0)$& $(0,0,0,0,0,0,1,1,1)$ & $(1,1)$&$(0,0,0,0,1,0,0,1,1)$ & $2^2 \cdot 3^2$ & $[40,20,8]_{II}$ \\ \hline
$C_9$ &$(1,0)$& $(0,0,0,0,0,0,1,1,1)$ & $(1,1)$&$(0,1,0,1,1,1,1,1,1)$ & $2^{11} \cdot 3^2$ & $[40,20,8]_{II}$ \\ \hline
$C_9$ &$(1,0)$& $(0,0,0,1,0,1,1,1,1)$ & $(1,1)$&$(0,0,1,1,0,1,0,1,1)$ & $2^3 \cdot 3^2 \cdot 5 \cdot 19$ & $[40,20,8]_{II}$ \\ \hline
$C_{3,3}$ & $(0,0)$& $(0,0,0,0,0,1,0,0,1)$ & $(0,1)$&$(0,1,1,0,1,1,1,0,1)$ & $2^{11} \cdot  3^2$ & $[40,20,8]_{I}$ \\ \hline
$C_{3,3}$ &$(0,0)$& $(0,0,0,0,1,1,0,1,1)$ & $(0,1)$&$(0,0,1,0,1,1,1,0,0)$ & $2^2 \cdot 3^2$ & $[40,20,8]_{I}$ \\ \hline
$C_{3,3}$ &$(0,0)$& $(0,0,1,0,1,1,1,1,1)$ & $(0,1)$&$(0,1,1,1,0,1,0,1,1)$ & $ 2^2 \cdot 3^2$ & $[40,20,8]_{I}$ \\ \hline
$C_{3,3}$ &$(1,0)$& $(0,0,0,0,0,1,0,1,1)$ & $(1,1)$&$(0,0,1,0,1,0,0,0,1)$ & $2^2 \cdot 3^2$ & $[40,20,8]_{II}$ \\ \hline
$C_{3,3}$ &$(1,0)$& $(0,0,1,0,0,1,0,0,1)$ & $(1,1)$&$(0,1,1,1,1,0,1,1,1)$ & $2^{11} \cdot 3^2$ & $[40,20,8]_{II}$ \\ \hline
$C_{3,3}$ &$(1,0)$& $(0,0,1,0,0,1,1,1,1)$ & $(1,1)$&$(0,0,1,0,1,1,1,0,1)$ & $2^3 \cdot 3^2 \cdot 5 \cdot 19$ & $[40,20,8]_{II}$ \\ \hline
$C_3\times C_3$&$(0,0)$& $(0,0,0,0,1,1,0,1,1)$ & $(0,1)$&$(0,0,1,0,0,1,1,1,0)$ & $2^{15} \cdot 3^2 \cdot 5$ & $[40,20,8]_{I}$ \\ \hline
$C_3\times C_3$&$(1,0)$& $(0,0,0,0,0,0,1,1,1)$ & $(1,1)$&$(0,0,1,0,0,1,0,1,0)$ & $2^4 \cdot 3^4$ & $[40,20,8]_{II}$ \\ \hline
$C_3\times C_3$&$(1,0)$& $(0,0,1,0,0,1,1,1,1)$ & $(1,1)$&$(0,0,1,1,1,0,1,1,0)$ & $2^{15} \cdot 3^2 \cdot 5$ & $[40,20,8]_{II}$ \\ \hline
\end{tabular}}%
\end{center}
\end{table}

The possible weight enumerators for a self-dual Type I $\left[80,40,14\right]$-code is given in \cite{Yankov} as:
\[ W_{80,2} =1+\left( 3200+4 \alpha \right) y^{14}+\left( 47645-8 \alpha+256 \beta \right) y^{16}+\cdots ,\]
where $\alpha$ and $\beta$ are integers. A $[80,40,14]$ was constructed in \cite{Dorfer} (it's weight enumerator was not stated) and a $[80,40,14]$ code was constructed in \cite{GULLHAR5}
with $\alpha=-280$, $\beta=10$. In \cite{Yankov}, $[80,40,14]$ codes were constructed for $\beta=0$ and $\alpha=-17k$ where $k \in \{2,\ldots,25,27\}$.

\begin{table}[H]
\caption{Binary $[80,40,14]$-codes from self-dual codes over $\FF_2+u\FF_2$ via $C_{9}$, $C_{3,3}$, $C_3\times C_3$.}
\begin{center}
\scalebox{0.8}{
\begin{tabular}{ccccccc}
\hline
 Const & $(\gamma_1,\gamma_2)$ & $v_1$ & $(\gamma_3,\gamma_4)$ & $v_2$ & $|Aut(\mathcal{C})|$ & $W_{80,2}$\\ \hline
 $C_9$ & $(u,u)$ & $(u,u,0,u,u,0,u,1,1)$ & $(0,1)$ & $(0,0,1,1,1,0,3,1,3)$ & $2^2\cdot3^2$ & $\alpha=-330$, $\beta=10$ \\ \hline
 $C_9$ & $(1,u)$ & $(u,0,0,3,u,1,3,3,3)$ & $(1,1)$ & $(u,0,1,3,0,3,u,1,1)$ & $2^2\cdot3^2$ & $\alpha=-258$, $\beta=1$ \\ \hline
 $C_9$ & $(u,0)$ & $(0,0,0,u,1,u,3,3,3)$ & $(u,1)$ & $(0,u,1,0,1,u,0,1,3)$ & $2^2\cdot3^2$ & $\alpha=-240$, $\beta=1$ \\ \hline
 $C_9$ & $(u,u)$ & $(u,0,0,0,1,0,1,3,3)$ & $(0,1)$ & $(u,u,1,u,3,0,u,3,1)$ & $2^2\cdot3^2$ & $\alpha=-204$, $\beta=1$ \\ \hline
 $C_9$ & $(u,u)$ & $(0,0,0,u,1,0,3,3,1)$ & $(0,1)$ & $(0,u,1,u,3,0,0,3,1)$ & $2^2\cdot3^2$ & $\alpha=-186$, $\beta=1$ \\ \hline
 $C_9$ & $(u,0)$ & $(u,u,0,u,1,0,1,1,1)$ & $(u,1)$ & $(u,u,1,u,1,u,u,3,3)$ & $2^3\cdot3^2$ & $\alpha=-168$, $\beta=1$ \\ \hline
 $C_9$ & $(u,0)$ & $(0,0,0,u,1,0,3,3,1)$ & $(u,1)$ & $(u,u,1,0,3,0,u,3,1)$ & $2^2\cdot3^2$ & $\alpha=-150$, $\beta=1$ \\ \hline
 $C_9$ & $(u,u)$ & $(u,u,0,u,1,0,1,1,1)$ & $(0,1)$ & $(0,0,1,0,1,0,0,3,3)$ & $2^3\cdot3^2$ & $\alpha=-96$, $\beta=1$ \\ \hline
 $C_{3,3}$&$(u,0)$ & $(u,u,u,u,u,1,0,0,1)$ & $(u,1)$ & $(u,1,1,u,3,1,3,u,1)$ & $2^2\cdot3^2$ & $\alpha=-366$, $\beta=10$ \\ \hline
 $C_{3,3}$&$(u,0)$ & $(u,u,u,u,u,1,u,0,3)$ & $(u,1)$ & $(u,1,3,0,1,3,1,u,3)$ & $2^2\cdot3^2$ & $\alpha=-348$, $\beta=10$ \\ \hline
 $C_{3,3}$&$(1,u)$ & $(0,u,u,0,u,3,u,3,1)$ & $(1,1)$ & $(u,u,1,u,3,u,u,u,3)$ & $2^3\cdot3^2$ & $\alpha=-312$, $\beta=1$ \\ \hline
 $C_{3,3}$&$(0,u)$ & $(0,0,0,u,u,1,0,0,1)$ & $(u,1)$ & $(u,1,1,u,3,1,3,u,1)$ & $2^2\cdot3^2$ & $\alpha=-294$, $\beta=10$ \\ \hline
 $C_{3,3}$&$(1,u)$ & $(u,0,u,0,u,3,u,1,3)$ & $(1,1)$ & $(0,0,3,u,1,u,u,u,3)$ & $2^2\cdot3^2$ & $\alpha=-222$, $\beta=1$ \\ \hline
 $C_{3,3}$&$(1,u)$ & $(0,0,u,0,u,3,0,3,1)$ & $(1,1)$ & $(0,0,3,u,1,u,u,u,3)$ & $2^2\cdot3^2$ & $\alpha=-168$, $\beta=1$ \\ \hline
 $C_{3,3}$&$(0,u)$ & $(0,0,u,u,1,1,0,3,3)$ & $(u,1)$ & $(0,0,1,0,3,1,3,u,0)$ & $2^2\cdot3^2$ & $\alpha=-186$, $\beta=1$ \\ \hline
 $C_3\times C_3$& $(u,u)$ & $(0,u,0,0,1,1,0,3,3)$ & $(1,1)$ & $(u,u,1,u,0,3,3,1,u)$ & $2^2\cdot3^2$ & $\alpha=-276$, $\beta=10$ \\ \hline
 $C_3\times C_3$& $(1,u)$ & $(u,u,3,0,0,3,1,3,3)$ & $(1,1)$ & $(u,0,3,3,3,u,1,3,0)$ & $2^3\cdot3^2$ & $\alpha=-276$, $\beta=10$ \\ \hline
 $C_3\times C_3$& $(1,u)$ & $(u,u,u,u,0,0,1,1,1)$ & $(1,1)$ & $(u,0,1,u,0,1,0,1,0)$ & $2^3\cdot3^2$ & $\alpha=-240$, $\beta=1$ \\ \hline
 $C_3\times C_3$& $(1,u)$ & $(u,u,3,0,0,3,1,3,3)$ & $(1,1)$ & $(u,0,3,3,3,0,3,1,u)$ & $2^3\cdot3^2$ & $\alpha=-204$, $\beta=10$ \\ \hline
 \end{tabular}}
\end{center}
\end{table}

\subsection{Constructions coming from $C_{13}$}
The best known Type I binary codes of length 56 have minimum weight 10. The possible weight enumerators for such a $\left[56,28,10\right]$-code is given in \cite{harada1} as:
\begin{eqnarray*}
W_{56,1} &=&1+\left( 308+4 \alpha \right) y^{10}+\left( 4246-8 \alpha \right) y^{12}+\left( 40852-28 \alpha \right) y^{14}+\cdots ,\\
W_{56,2} &=&1+\left( 308+4 \alpha \right) y^{10}+\left( 3990-8 \alpha \right) y^{12}+\left( 42900-28 \alpha \right) y^{14}+\cdots \\
\end{eqnarray*}%
where $\alpha$ is an integer. In \cite{harada1} codes constructed for the values of $\alpha=-18, -22, -24$ in $W_{56,1}$ and $\alpha=0, -2, -4, -6, -8, -10, -12, -14, -16, -18, -20, -22$ and $-24$ in
$W_{56,2}$.

In the following table we give a list of Type I self-dual codes of parameters $[56,28,10]$ by applying the construction $C_{13}$ over the binary field. The codes listed below all have new weight enumerators.

\begin{table}[H]
\caption{$[56,28,10]$ codes over $\FF_2$ from $C_{13}$.}
\begin{center}
\scalebox{0.8}{
\begin{tabular}{ccccccc}
\hline
 $(\gamma_1,\gamma_2)$& $v_1$ & $(\gamma_3,\gamma_4)$&$v_2$ & $|Aut(C)|$ & $\alpha$ & $W_{56,i}$\\ \hline \hline
 $(0,0)$ & $(0,0,0,0,0,0,0,1,0,1,0,1,1)$ & $(0,1)$ & $(0,0,0,0,1,1,1,0,1,1,1,1,1)$ & $2 \cdot 13$ & $-51$ & $1$\\ \hline
 $(0,0)$ & $(0,0,0,0,1,1,0,1,1,1,1,1,1)$ & $(0,1)$ & $(0,1,0,1,0,1,1,0,1,0,1,1,1)$ & $2 \cdot 13$ & $-38$ & $1$\\ \hline
 $(0,0)$ & $(0,0,0,0,0,0,1,1,0,1,1,1,1)$ & $(0,1)$ & $(0,0,0,0,1,1,0,1,0,0,1,1,1)$ & $2 \cdot 13$ & $-25$ & $1$\\ \hline
 $(0,0)$ & $(0,0,0,0,0,0,0,0,0,1,1,1,1)$ & $(0,1)$ & $(0,0,1,1,0,1,0,1,0,1,1,1,1)$ & $2^2 \cdot 13$ & $-38$ & $1$\\ \hline
 $(0,0)$ & $(0,0,0,0,0,0,0,0,0,0,0,1,1)$ & $(0,1)$ & $(0,0,0,1,0,0,1,0,1,1,1,0,1)$ & $2^2 \cdot 13$ & $-12$ & $1$\\ \hline
 $(0,0)$ & $(0,0,0,0,1,0,0,1,1,0,1,1,1)$ & $(0,1)$ & $(0,0,0,0,1,1,0,1,0,1,0,1,1)$ & $2^2 \cdot 3 \cdot 13$ & $-38$ & $1$\\ \hline
 $(0,0)$ & $(0,0,1,1,0,1,1,1,1,1,1,1,1)$ & $(0,1)$ & $(0,1,0,1,1,1,1,0,1,1,1,1,1)$ & $2^2 \cdot 3 \cdot 13$ & $-64$ & $1$\\ \hline
\end{tabular}}%
\end{center}
\end{table}

\subsection{Constructions coming from $C_{15}$}
We apply the constructions $C_{15}$ and $C_3\times C_5$ over the binary field to obtain a number of extremal binary self-dual codes of length 64. We only tabulate the codes obtained from construction $C_{15}$, however we note that we have obtained the exact same codes from $C_3\times C_5$ as well.

\begin{table}[H]
\caption{Binary self-dual codes of length $64$ from $C_{15}$.}
\begin{center}
\scalebox{0.7}{
\begin{tabular}{cccccc}
\hline
$(\gamma_1,\gamma_2)$& $v_1$ & $(\gamma_3,\gamma_4)$&$v_2$ & $|Aut(C)|$ & Type \\ \hline \hline
$(0,0)$& $(0,0,0,0,0,0,0,0,0,1,0,1,0,1,1)$ & $(0,1)$&$(0,0,0,0,1,0,1,0,1,0,0,0,1,1,1)$ & $2 \cdot 3 \cdot 5$ & $[64,32,12]_{II}$ \\ \hline
$(0,0)$& $(0,0,0,0,0,0,0,0,0,0,0,1,0,0,1)$ & $(0,1)$&$(0,0,0,0,1,1,0,1,1,0,0,1,1,1,1)$ & $2^2 \cdot 3 \cdot 5$ & $[64,32,12]_{II}$ \\ \hline
$(0,0)$& $(0,0,0,0,1,1,0,1,1,0,0,1,1,1,1)$ & $(0,1)$&$(0,0,0,1,0,0,0,1,0,0,0,1,1,1,1)$ & $2^3 \cdot 3 \cdot 5$ & $[64,32,12]_{II}$ \\ \hline
$(0,0)$& $(0,0,0,0,1,0,0,1,1,0,1,0,0,1,1)$ & $(0,1)$&$(0,0,0,1,1,0,1,0,1,1,0,1,0,1,1)$ & $2^{12} \cdot 3 \cdot 5$ & $[64,32,12]_{II}$ \\ \hline
$(1,0)$& $(0,0,0,0,0,0,0,0,0,1,0,1,1,1,1)$ & $(1,1)$&$(0,0,0,1,0,0,1,0,1,0,1,1,1,0,1)$ & $2 \cdot 3 \cdot 5$ & $W_{64,1}$ ($\beta=14$) \\ \hline
$(1,0)$& $(0,0,0,0,0,0,0,1,0,1,1,0,0,1,1)$ & $(1,1)$&$(0,0,0,0,1,0,0,1,1,0,1,1,0,1,1)$ & $2^2 \cdot 3 \cdot 5$ & $W_{64,1}$ ($\beta=14$) \\ \hline
$(1,0)$& $(0,0,0,0,0,0,0,0,0,0,1,0,0,1,1)$ & $(1,1)$&$(0,0,0,0,1,1,1,0,1,1,1,0,1,1,1)$ & $2^3 \cdot 3 \cdot 5$ & $W_{64,1}$ ($\beta=14$) \\ \hline
$(1,0)$& $(0,0,0,0,0,0,0,0,0,1,0,1,1,1,1)$ & $(1,1)$&$(0,0,0,0,0,1,0,1,0,0,1,1,1,1,1)$ & $2 \cdot 3 \cdot 5$ & $W_{64,1}$ ($\beta=29$) \\ \hline
$(1,0)$& $(0,0,0,0,0,0,0,0,1,0,1,0,1,1,1)$ & $(1,1)$&$(0,0,0,1,0,1,0,0,0,1,0,1,1,1,1)$ & $2 \cdot 3 \cdot 5$ & $W_{64,1}$ ($\beta=44$) \\ \hline
$(1,0)$& $(0,0,0,0,0,0,0,0,0,0,0,0,1,1,1)$ & $(1,1)$&$(0,0,0,1,0,1,1,1,1,1,0,1,0,1,1)$ & $2^2 \cdot 3 \cdot 5$ & $W_{64,1}$ ($\beta=44$) \\ \hline
$(1,0)$& $(0,0,0,0,0,1,0,0,0,1,0,1,0,1,1)$ & $(1,1)$&$(0,0,1,1,1,1,0,1,1,0,1,1,1,1,1)$ & $2 \cdot 3 \cdot 5$ & $W_{64,1}$ ($\beta=59$) \\ \hline
$(1,0)$& $(0,0,0,0,0,0,0,0,0,0,1,0,0,1,1)$ & $(1,1)$&$(0,0,0,1,0,0,1,1,1,1,1,1,0,1,1)$ & $2^2 \cdot 3 \cdot 5$ & $W_{64,1}$ ($\beta=74$) \\ \hline
\end{tabular}}%
\end{center}

\end{table}

\section{New Codes of length $68$}

In this section, we construct new extremal self-dual codes of length $68$ by extending certain previously constructed codes of length $64$ (using Theorem \ref{extension}) from Table \ref{T:1}. In particular we use $\mathcal{C}_{11}$ that was listed in Table \ref{T:1}.

\subsection{New Codes of length $68$ from $(\FF_4+u\FF_4)C_4$}

The possible weight enumerator of a self-dual $\left[ 68,34,12\right] _{2}$%
-code is in one of the following forms by \cite{buyuklieva,harada}:
\begin{eqnarray*}
W_{68,1} &=&1+\left( 442+4\beta \right) y^{12}+\left( 10864-8\beta \right)
y^{14}+\cdots ,104\leq \beta \leq 1358, \\
W_{68,2} &=&1+\left( 442+4\beta \right) y^{12}+\left( 14960-8\beta
-256\gamma \right) y^{14}+\cdots
\end{eqnarray*}%
where $0\leq \gamma \leq 9$. Recently, Yankov et al. constructed the first
examples of codes with a weight enumerator for $\gamma =7$ in $W_{68,2}$.
In \cite{Us2}, \cite{Us3} and \cite{GKY}, more unknown $W_{68,2}$ codes were constructed.
Together with these, the existence of the codes in $W_{68,2}$ is known for;
\begin{eqnarray*}
\gamma  &=&0,\ \beta \in \left\{ 2m\left\vert m=0,7,11,14,17,21,\ldots
,99,102,105,110,119,136,165\right. \right\} \text{ or} \\
\beta  &\in &\left\{ 2m+1\left\vert m=3,5,8,10,15,16,17,18,20,\ldots
,82,87,93,94,101,104,110,115\right. \right\} ; \\
\gamma  &=&1,\ \beta \in \left\{ 2m|m=22,24,\ldots ,99,102\right\} \text{
or }\beta \in \left\{ 2m+1|m=24,\ldots ,85,87\right\} ; \\
\gamma  &=&2,\ \beta \in \left\{ 2m\left\vert m=29,\ldots
,100,103,104\right. \right\} \text{ or }\beta \in \left\{ 2m+1\left\vert
m=32,34,\ldots ,79\right. \right\} ; \\
\gamma  &=&3,\ \beta \in \left\{ 2m\left\vert m=40,\ldots ,98,101,102\right.
\right\} \text{ or} \\
\beta  &\in &\left\{ 2m+1|m=41,43,\ldots ,77,79,80,83,96\right\} ; \\
\gamma  &=&4\text{, }\beta \in \left\{ 2m\left\vert m=43,44,48,\ldots
,92,97,98\right. \right\} \text{ or } \\
\beta  &\in &\left\{ 2m+1\left\vert m=48,\ldots ,55,58,60,\ldots
,78,80,83,84,85\right. \right\} ;\text{ } \\
\gamma  &=&5\text{ with }\beta \in \left\{ m|m=113,116,\ldots ,181\right\};
\\
\gamma  &=&6\text{ with }\beta \in \left\{ 2m|m=69,77,78,79,81,88\right\};  \\
\gamma  &=&7\text{ with }\beta \in \left\{ 7m|m=14,\ldots ,39,42\right\} .
\end{eqnarray*}

Recall that the previously constructed codes of length $64$ (from Table \ref{T:1}) are codes over $\FF_4+u\FF_4$. In order to apply
Theorem \ref{extension}, it requires the codes to be over $\FF_2+u\FF_2$. Before considering extensions of these
codes, we need to use the Gray map $\psi_{\FF_4+u\FF_4}$ to convert them to a code over $\FF_2+u\FF_2$. The following
table details the new extremal self-dual codes of length $68$. For each new code constructed we note the original code
of length $64$ from Table \ref{T:1}, the unit $c \in \FF_2+u\FF_2$, the vector $X$ required to apply Theorem \ref{extension}. The codes listed all have new weight enumerators.

\begin{table}[H]

\begin{center}

\caption{Type I Extremal Self-dual code of length $68$ from $C_{3}$ over $\FF_4+u\FF_4$.}\label{F4UF4C4}

\scalebox{0.7}{

\begin{tabular}{ccccccc} \hline

$\mathcal{C}_{68,i}$&  $\mathcal{C}_i$ & $c$  & $X$ & $\gamma$  &
$\beta$  \\ \hline
$\mathcal{C}_{68,1}$ &  $11$ &$1$ &
$(1,u,u,3,3,0,1,3,u,3,0,1,0,0,0,1,u,0,3,3,0,1,1,u,u,u,3,3,0,u,u,3)$
& $\textbf{4}$ & $\textbf{190}$
\\ \hline
$\mathcal{C}_{68,2}$ &  $11$ &$1$ &
$(0,1,0,1,3,1,0,0,u,u,1,u,u,0,1,1,1,0,u,1,u,1,1,0,1,0,3,3,u,1,u,u)$
& $\textbf{4}$ & $\textbf{192}$ \\ \hline
$\mathcal{C}_{68,3}$ &
$11$ &$u+1$ &
$(1,u,u,3,3,0,1,3,u,3,0,3,u,0,u,3,u,0,3,1,0,1,3,0,0,u,1,3,0,u,u,1)$
& $\textbf{4}$ & $\textbf{204}$ \\ \hline
$\mathcal{C}_{68,4}$ &
$11$ &$u+1$ &
$(u,1,0,3,0,0,0,u,1,u,u,0,0,0,3,3,1,3,u,0,0,u,3,1,0,0,u,0,0,0,1,3)$
& $\textbf{4}$ & $\textbf{208}$ \\ \hline
$\mathcal{C}_{68,5}$ &
$11$ &$1$ &
$(0,3,u,3,0,0,u,0,1,u,u,0,0,u,3,3,1,3,0,0,u,u,1,3,u,u,0,u,u,0,3,1)$
& $\textbf{4}$ & $\textbf{210}$ \\ \hline
$\mathcal{C}_{68,6}$ &
$11$ &$u+1$ &
$(u,1,u,1,0,0,u,0,3,0,u,0,0,u,1,1,3,3,0,0,0,u,3,3,0,0,0,0,u,0,1,1)$
& $\textbf{4}$ & $\textbf{214}$ \\ \hline
\end{tabular}}
  \end{center}
\end{table}

\subsection{New self-dual codes of length 68 from Neighboring construction}

Two self-dual binary codes of dimension $k$ are said to be neighbors
if their intersection has dimension $k-1$. Without loss of
generality, we consider the standard form of the
generator matrix of $C$. Let $x\in {\mathbb{F}}_{2}^{n}-C$ then $%
D=\left\langle \left\langle x\right\rangle ^{\bot }\cap
C,x\right\rangle $ is a neighbor of $C$. The first $34$ entries of
$x$ are set to be $0$, the rest of the vectors are listed in Table
\ref{neighbor}. As neighbors of codes in Table \ref{F4UF4C4} \ we
obtain $17$ new codes with weight enumerators in $W_{68,2},$ which
are listed in Table \ref{neighbor}. All the codes have an
automorphism group of order 2.

\begin{table}[H]
\caption{New codes of length 68 as neighbors of codes in Table \protect\ref%
{F4UF4C4} }
\label{neighbor}
\begin{center}
\scalebox{0.7}{
\begin{tabular}{ccccc||ccccc}
\hline
 $\mathcal{N}_{68,i}$ & $\mathcal{C}_{68,i}$ & $(x_{35},x_{36},...,x_{68})$ & $\gamma $ & $\beta $  &$\mathcal{N}_{68,i}$ & $\mathcal{C}_{68,i}$ & $(x_{35},x_{36},...,x_{68})$ & $\gamma $ & $\beta $\\ \hline
$1$ & $6$ & $(1111101001010000101110100001111010)$ & $3$ & $165$ & $2$ & $6$ & $(0011101000011001110010111010000011)$ & $3$ & $169$ \\ \hline
$3$ & $6$ & $(0110001110010110101000100011111101)$ & $3$ & $171$ & $4$ & $6$ & $(0100010010100011000110000110001010)$ & $3$ & $173$ \\ \hline
$5$ & $6$ & $(0110010001110000000011011110010100)$ & $4$ & $163$ & $6$ & $6$ & $(1110111111010101100001011001111011)$ & $4$ & $165$ \\ \hline
$7$ & $6$ & $(1000101111011011101011010101110100)$ & $4$ & $173$ & $8$ & $6$ & $(0100101010011010111001000111111100)$ & $4$ & $177$\\ \hline
$9$ & $6$ & $(1101110100111100110010000111001100)$ & $4$ & $179$ & $10$ & $6$ & $(1001010100010110110000010011000000)$ & $4$ & $181$ \\ \hline
$11$ & $2$ & $(1000101100010110000101111000010010)$ & $4$ & $183$ & $12$ & $6$ & $(0010111011011111100101111101000100)$ & $4$ & $185$ \\ \hline
$13$ & $6$ & $(1011011110010100011001011011001111)$ & $4$ & $187$ & $14$ & $6$ & $(0010010001110100011000001010000110)$ & $4$ & $188$\\ \hline
$15$ & $6$ & $(1001100011010110110101011110010001)$ & $4$ & $189$ & $16$ & $6$ & $(0111110011011110010101111010001100)$ & $4$ & $193$\\ \hline
$17$ & $5$ & $(0101101101011000110011101010001000)$ & $5$ & $201$ &  &  &  &  &   \\ \hline
\end{tabular}}%
\end{center}
\end{table}

\section{Conclusion}

In this work, we introduced a new construction for constructing self-dual codes using group rings. We provided certain conditions
when this construction produces self-dual codes and we established a link between units/non-units and self-dual codes. We demonstrated the relevance of this new construction by constructing many binary self-dual codes, including new self-dual codes of length $56$, $68$ and $80$.

\begin{itemize}
\item \textbf{Codes of length $56$:} We were to able to construct the following $[56,28,10]$ codes with new weight enumerators in $W_{56,1}$:
\[ \alpha \in \{-12,-25,-38,-51,-64\}.\]
\item \textbf{Codes of length $68$:} We were able to construct the following extremal binary self-dual codes with new weight enumerators in $W_{68,2}$:
\[
\begin{split}
(\gamma=3, &\quad \beta \in \{165,169,171,173\}),\\
(\gamma=4, &\quad \beta \in \{163,165,173,177,179,181,183,185,187,188,189,190,192,193,\\
& \qquad \quad 204,208,210,214\}),\\
(\gamma=5, &\quad \beta=201).
\end{split}
\]
The binary generator matrices of these codes are available online at \cite{web}.

\item \textbf{Codes of length $80$:} We were to able to construct the following $[80,40,14]$ codes with new weight enumerators in $W_{80,2}$:
\[
\begin{split}
(\beta=1, &\quad \alpha \in \{-96,-150,-168,-186,-204,-222,-240,-258,-312\}),\\
(\beta=10, &\quad \alpha \in \{-204,-276,-294,-330,-348,-366\}).
\end{split}
\]
\end{itemize}

We have considered specific lengths that are suitable for the constructions over some special alphabets. The number of extremal self-dual codes including many new ones that we have found through these conrstructions make us believe that this is a relevant direction for future research. A major direction direction of research could be considering other families of rings or groups of higher orders, which might require a considerable computational power.


\begin{thebibliography}{99.}%

\bibitem{BLM} F. Bernhardt, P. Landrock, and O. Manz, \textquotedblleft The extended Golay
codes considered as ideals", \emph{J. Combin. Theory Ser. A}, Vol. 55, no.
2, pp. 235--246, 1990.

\bibitem{BFC}  K. Betsumiya, S. Georgiou, T. A. Gulliver, M. Harada and C.  Koukouvinos, \textquotedblleft On self-dual codes over some prime fields ",  \emph{ Discrete Math.}, vol. 262, no. 1--3, pp. 37--58, 2003.

\bibitem{buyuklieva} S. Buyuklieva and I. Boukliev, \textquotedblleft Extremal
self-dual codes with an automorphism of order 2", \emph{IEEE Trans. Inform.
Theory}, vol. 44, pp. 323--328, 1998.

\bibitem{conway} J.H. Conway and N.J.A. Sloane, \textquotedblleft A new
upper bound on the minimal distance of self-dual codes", \emph{ IEEE Trans.
Inform. Theory}, vol. 36, no. 6, pp. 1319--1333, 1990.

\bibitem{davis}  P.J. Davis, \textquotedblleft Circulant Matrices",
Chelsea Publishing New York, 1979.

\bibitem{Dorfer} G. Dorfer and H. Maharaj, \textquotedblleft Generalized AG codes and generalized duality",
 \emph{Finite Fields Appl.}, vol. 9, pp. 194--210, 2018).

\bibitem{dougherty2} S.T. Dougherty, P. Gaborit, M. Harada and P. Sole,
\textquotedblleft Type II codes over $\mathbb{F}_{2}+u\mathbb{F}_{2}$",
\emph{IEEE Trans. Inform. Theory}, vol. 45, pp. 32--45, 1999.

\bibitem{Us3} S.T. Dougherty, J. Gildea and A. Kaya,
\textquotedblleft Quadruple Bordered Constructions of Self-Dual Codes from Group Rings", {\bf submittted}.

\bibitem{Us2} S.T. Dougherty, J. Gildea, A. Korban, A. Kaya, A. Tylshchak, and B. Yildiz, \textquotedblleft Bordered Constructions of Self-Dual Codes from Group Rings", {\bf submitted}.


\bibitem{binary} S.T. Dougherty, M. Harada, and T.A. Gulliver, \textquotedblleft Extremal Binary Self-dual Codes", \emph{IEEE Trans. Inform. Theory}, vol. 43, no. 6, pp. 2036--2047, 1997.

\bibitem{dougherty} S.T.Dougherty, J. -L.Kim, H.Kulosman and H. Liu, \textquotedblleft Self-dual
codes over commutative Frobenius rings", \emph{Finite Fields Appl.},  vol. 16,
pp. 14--26, 2010.

\bibitem{Rkpaper} S.T. Dougherty, B. Yildiz and S. Karadeniz, \textquotedblleft Codes over $R_k$, Gray maps and their Binary Images", \emph{Finite Fields Appl.}, vol. 17, no. 3, pp. 205--219, 2011.

\bibitem{Rkpaper2} S.T. Dougherty, B. Yildiz and S. Karadeniz, \textquotedblleft Self-dual Codes over $R_k$ and Binary Self-Dual Codes", \emph{European Journal of Pure and Applied Mathematics}, vol. 6, no. 1, pp. 89--106, 2013.

\bibitem{gaborit} P. Gaborit, V. Pless, P. Sole and O. Atkin,
\textquotedblleft Type II codes over $\mathbb{F}_{4}$", \emph{Finite Fields Appl.}, vol. 8, no. 2, pp. 171--183, 2002.

\bibitem{GKTY} J. Gildea, A. Kaya, R. Taylor and B. Yildiz, \textquotedblleft Constructions for self-dual codes induced from group rings",
 \emph{Finite Fields Appl.}, vol. 51, pp. 71--92, 2018.

 \bibitem{GKY} J. Gildea, A. Kaya and B. Yildiz, \textquotedblleft An Altered Four Circulant Construction for Self-Dual codes from Group Rings and new extremal binary self-dual codes I", {\bf to appear in}, \emph{Discrete Math.}

\bibitem{web} J. Gildea, A. Kaya, A. Tylyshchak and B. Yildiz \textquotedblleft
Binary generator matrices for extremal binary self-dual codes of length
$68$", available online at \url{http://abidinkaya.wixsite.com/math/research5}.

 \bibitem{GULLHAR5}  T.A. Gulliver and M. Harada, \textquotedblleft Classification of extremal double circulant self-dual codes of lengths 74-88",
 \emph{ Discr. Math.}, vol. 306, pp. 2064--2072, 2006.

 \bibitem{harada}  M. Harada and A. Munemasa, \textquotedblleft Some restrictions
on weight enumerators of singly even self-dual codes", \emph{IEEE Trans.
Inform. Theory}, vol. 52, pp. 1266--1269, 2006.

\bibitem{harada1}  M. Harada and K. Saito, \textquotedblleft Singly even self-dual codes constructed from Hadamard matrices of order $28$", \emph{Australasian Journal of Combinatorics}, vol. 70, no. 2, pp. 288--296, 2018.

\bibitem{Hurley1} T. Hurley, \textquotedblleft Group Rings and Rings of Matrices", \emph{Int. Jour. Pure and Appl. Math.}, vol. 31, no. 3, pp. 319--335, 2006.

\bibitem{Hurley2} T. Hurley, \textquotedblleft Self-dual, dual-containing and related quantum codes from group rings", arXiv:0711.3983, 2007.

\bibitem{MAGMA} W. Bosma, J. J. Cannon, C. Fieker, A. Steel (eds.), \textquotedblleft Handbook of Magma functions", Edition 2.16 ,2010.

\bibitem{ling} S. Ling and P. Sole, \textquotedblleft Type II codes over $%
\mathbb{F}_{4}+u\mathbb{F}_{4}$", \emph{Europ. J. Combinatorics}, vol. 22,
pp. 983--997, 2001.

\bibitem{IT} I. McLoughlin and T. Hurley, \textquotedblleft A group ring construction of the extended binary Golay code",
\emph {IEEE Trans. Inform. Theory}, vol. 54, no. 9, pp. 4381--4383, 2008.

\bibitem{Rains} E.M. Rains, \textquotedblleft Shadow Bounds for Self Dual
Codes", \emph{IEEE Trans. Inform. Theory}, vol. 44, pp.134--139, 1998.


\bibitem{Yankov} N. Yankov, D Anev and M. Gurel, \textquotedblleft Self-Dual codes with an automorphism of order $13$", \emph{Advances in Mathematics of Communications}, vol. 11, no. 3 pp.635--645, 2017.

\end{thebibliography}
\end{document}